\theoremstyle{plain}
\newtheorem{thm}{Theorem}[section]
\newtheorem{lem}[thm]{Lemma}
\newtheorem{prop}[thm]{Proposition}
\newtheorem*{prob*}{Problem}
\theoremstyle{definition}
\newtheorem{rem}[thm]{Remark}
\newcommand{\cV}{\ensuremath{\mathcal{V}}}
\newcommand{\sm}{\ensuremath{\smallsetminus}}
\newcommand{\isom}{\ensuremath{\cong}}
\newcommand{\inv}{\ensuremath{^{-1}}}
\newcommand{\Aut}{\textnormal{Aut}}
\newcommand{\es}{\ensuremath{\emptyset}}
\newcommand{\sub}{\subseteq}
\def\td{tree-decom\-po\-si\-tion}
\newcommand{\nat}{{\mathbb N}}
\newcommand{\real}{{\mathbb R}}
\newcommand{\ganz}{{\mathbb Z}}
\newcommand{\AF}{\ensuremath{\mathcal A}}
\newcommand{\CF}{\ensuremath{\mathcal C}}
\newcommand{\DF}{\ensuremath{\mathcal D}}
\newcommand{\EF}{\ensuremath{\mathcal E}}
\newcommand{\FF}{\ensuremath{\mathcal F}}
\newcommand{\HF}{\ensuremath{\mathcal H}}
\newcommand{\VF}{\ensuremath{\mathcal V}}
\newcommand{\WF}{\ensuremath{\mathcal W}}
\newenvironment{txteq*}
  {
    \begin{equation*}
    \begin{minipage}[c]{0.85\textwidth} 
    \em                                
  }
  {\end{minipage}\end{equation*}\ignorespacesafterend}
\begin{document}

\title{Planar transitive graphs}
\author{Matthias Hamann}
\address{Matthias Hamann, Department of Mathematics, University of Hamburg, Bundes\-stra\ss e~55, 20146 Hamburg, Germany}
\date{}
\maketitle

\begin{abstract}
We prove that the first homology group of every planar locally transitive finite graph~$G$ is a finitely generated $\Aut(G)$-module and we prove a similar result for the fundamental group of locally finite planar Cayley graphs.
Corollaries of these results include Droms's theorem that planar groups are finitely presented and Dunwoody's theorem that planar locally finite transitive graphs are accessible.
\end{abstract}

\section{Introduction}\label{sec_Intro}

A finitely generated group is \emph{planar} if it has some locally finite planar Cayley graph.
Droms~\cite{D-InfiniteEndedGroups} proved that finitely generated planar groups are finitely presented.
In this paper, we shall present an alternative proof of his result.
Whereas Droms's proof uses an accessibility result of Maskit~\cite{Maskit65} for planar groups, our self-contained proof does not.
Instead, our proof will be based on a general result about planar graphs~-- a variant of Theorem~\ref{thm_canGen} below.
But before we state that theorem, we have to make some definitions first.

We call a graph \emph{finitely separable} if no two distinct vertices are joined by infinitely many edge disjoint paths, or equivalently, any two vertices are separable by finitely many edges.

Let $G$ be a planar graph with planar embedding $\varphi\colon G\to\real^2$.
Two cycles $C_1,C_2$ in~$G$ are \emph{nested} if no $C_i$ has vertices or edges in distinct faces of~$\varphi(C_{3-i})$.
A~set of cycles is \emph{nested} if every two of its elements are nested.
We shall prove the following two theorems.

\begin{thm}\label{thm_canGen}
Every $3$-connected finitely separable planar graph has a canonical nested set of cycles generating the first homology group.
\end{thm}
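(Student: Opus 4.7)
The plan is to fix any planar embedding $\varphi\colon G\to\real^2$, let $\mathcal F$ be the set of face boundaries of $\varphi$ that are finite cycles, and argue that $\mathcal F$ is canonical, nested, and generates $H_1(G)$.

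For nestedness, let $C_1,C_2\in\mathcal F$ bound distinct faces $F_1,F_2$ of $\varphi$. The Jordan curve $\varphi(C_2)$ separates $\real^2$ into two open regions, one of which is $F_2$. Since the interior of every face of $\varphi$ is disjoint from $\varphi(G)$, every vertex of $C_1$ not on $\varphi(C_2)$, and the interior of every edge of $C_1$ not on $\varphi(C_2)$, must lie in the other region (an edge cannot cross $\varphi(C_2)$). Hence $C_1$ has no vertices or edges in distinct faces of $\varphi(C_2)$, and symmetrically; so $C_1$ and $C_2$ are nested.

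For generation, let $C$ be a finite cycle of $G$. The Jordan curve $\varphi(C)$ bounds a bounded open region $R\subset\real^2$. Using the finite separability hypothesis, I would argue that only finitely many faces of $\varphi$ lie in $R$ and that each such face has a finite boundary cycle belonging to $\mathcal F$. Taking the sum (over $\mathbb F_2$) of these boundary cycles cancels every edge strictly inside $R$, which is incident to exactly two such faces, while leaving every edge of $C$, which is incident to exactly one. So $C$ is a finite sum of elements of $\mathcal F$, which shows that $\mathcal F$ generates $H_1(G)$.

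For canonicity, I would give a combinatorial description of $\mathcal F$ independent of $\varphi$~-- for instance as the set of non-separating induced cycles of $G$, after Tutte's characterization of facial cycles in 3-connected planar graphs~-- and extend Whitney's uniqueness-of-embedding theorem to the 3-connected finitely separable case. Then $\mathcal F$ is automatically $\Aut(G)$-invariant. The hardest part will be precisely this extension of classical results to the non-locally-finite setting: one has to use finite separability to rule out faces of infinite boundary length lying inside a bounded region (for generation), and to underpin the combinatorial characterization of face cycles (for canonicity). In both steps it is the hypothesis that any two vertices can be separated by \emph{finitely many} edges that converts geometric planarity into usable combinatorial finiteness.
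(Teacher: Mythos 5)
Your approach---taking the finite face boundaries as the generating set---is exactly the easy argument that the paper points out works only in the VAP-free case (see the remark after Theorem~\ref{thm_finGenIntro}), and it does not prove the theorem in the stated generality. A $3$-connected finitely separable planar graph need not be locally finite and need not admit an embedding without accumulation points, and finite separability does \emph{not} yield the finiteness you invoke in the generation step. If the bounded region $R$ enclosed by $\varphi(C)$ contains an accumulation point of $\varphi(G)$, then $R$ contains infinitely many faces, and some faces may have infinite boundary walks (rays or double rays), which are not cycles at all and hence not in your set $\mathcal F$. In that situation $C$ is not a finite sum of finite face boundaries: any element of the span of $\mathcal F$ is a finite sum, and it cannot cancel the infinitely many edges drawn inside $R$, nor can infinite face boundaries be used. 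Concretely, a cycle surrounding an accumulation point of the embedding lies outside the span of the finite face boundaries, so $\mathcal F$ fails to generate the first homology group precisely in the cases the theorem is really about. (A secondary issue: you argue over $\mathbb F_2$, i.e.\ for the cycle space, whereas the statement concerns $\HF_1(G)$ over $\ganz$; this is repairable by orienting the faces, but the accumulation-point problem is not.)

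The paper's proof is structurally different and is designed to handle exactly this obstruction: it works with \emph{indecomposable closed walks} (equivalently, cycles without shortcuts), builds the generating set recursively over the length $i$ of the walks, and at each stage adds the ``optimally nested'' indecomposable walks of length $i$ not yet generated. The key ingredients are an un-crossing lemma (Lemma~\ref{lem_nestedCorners}), which replaces a crossing pair $C,D$ by nested walks $\widetilde C,\widetilde D$ of the same lengths while controlling generation, and the counting function $\mu_\EF$ (finite by Proposition~\ref{prop_MuFinite}, which is where finite separability actually enters), which strictly decreases under un-crossing (Proposition~\ref{prop_muDecreases}) and guarantees that a nested choice exists and is $\Aut(G)$-invariant; $3$-connectivity plus Whitney/Imrich is used only to make $\mu$ automorphism-invariant, giving canonicity. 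If you want to salvage your idea, it proves the theorem only under the additional hypothesis that the embedding is VAP-free; for the general case you need an argument, like the paper's, that does not rely on face boundaries being finite cycles.
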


Here, \emph{canonical} means mostly that the set of cycles is invariant under the automorphisms of the graph.
But in addition, our proof is constructive and this construction commutes with graph isomorphisms, i.\,e.\ whenever we run this construction for two isomorphic graphs $G$ and~$H$, then this isomorphism maps the set of cycles in~$G$ we obtain to that of~$H$.

\begin{thm}\label{thm_finGenIntro}
Every planar locally finite transitive graph $G$ has a set of cycles that generates the first homology group and consists of finitely many $\Aut(G)$-orbits.
\end{thm}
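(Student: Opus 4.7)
The plan is to reduce Theorem~\ref{thm_finGenIntro} to Theorem~\ref{thm_canGen} by passing to a canonical tree-decomposition of~$G$ whose torsos are either $3$-connected or essentially trivial. Since $G$ is locally finite it is automatically finitely separable: any two vertices are separated by the finitely many edges incident with one of them. So one may construct a canonical tree-decomposition $(\cT,\VF)$ of~$G$ with adhesion sets of size at most~$2$ whose torsos are $3$-connected planar graphs, cycles, or single edges (in the spirit of the Tutte decomposition into $3$-connected components). Being canonical, the decomposition is invariant under~$\Aut(G)$, and each torso inherits a planar embedding from that of~$G$.

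Next, I would apply Theorem~\ref{thm_canGen} to every $3$-connected torso to obtain a canonical nested generating set of cycles for its first homology; cycle torsos contribute their unique cycle, and edge torsos contribute nothing. Each such cycle may traverse ``virtual edges'' across adhesion pairs, so I would lift it to a cycle of~$G$ by replacing every virtual edge $uv$ by a canonically chosen $u$--$v$ path in a neighbouring part of the decomposition. In addition, for every adhesion pair~$\{u,v\}$ I would add one ``adhesion cycle'' obtained by joining the canonical paths on the two sides. A Mayer--Vietoris-type argument along the tree~$\cT$ then shows that the totality of these lifted cycles generates~$H_1(G)$.

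Finally, local finiteness implies that each vertex of~$G$ lies in only finitely many parts of the tree-decomposition (the adhesion sets at any vertex consume only finitely many of its incident edges), and vertex-transitivity therefore forces $\Aut(G)$ to act with only finitely many orbits on parts of~$\cT$. Since the cycle set was constructed canonically from torsos, we obtain only finitely many $\Aut(G)$-orbits of cycles, completing the proof.

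The main obstacle I anticipate is the canonical lifting step: choosing, for each virtual edge $uv$ of a torso, an actual $u$--$v$ path in another part so that the whole construction remains $\Aut(G)$-equivariant, and then verifying that the resulting cycles together span $H_1(G)$. The spanning claim reduces to a standard Mayer--Vietoris assembly along~$\cT$, but the equivariant choice of paths is delicate: in a $3$-connected planar torso there may be several $u$--$v$ paths available, so one must coordinate the canonical structures supplied by Theorem~\ref{thm_canGen} across neighbouring torsos to keep the choices consistent and orbit-finite.
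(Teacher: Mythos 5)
Your reduction along a canonical Tutte-type tree-decomposition to $3$-connected torsos is essentially the paper's own first step (Propositions~\ref{prop_red1to2} and~\ref{prop_red2to3}, including the device of replacing virtual adhesion edges by fixed paths), so that part is sound, modulo the bookkeeping that the torsos are only quasi-transitive under their stabilisers, which is why the paper works throughout with group actions having finitely many vertex orbits rather than with transitivity. The genuine gap is the sentence ``Since the cycle set was constructed canonically from torsos, we obtain only finitely many $\Aut(G)$-orbits of cycles.'' Canonical means $\Aut$-invariant, and an $\Aut$-invariant set can meet infinitely many orbits --- the set of \emph{all} cycles is canonical. Theorem~\ref{thm_canGen} gives no bound on the lengths of the cycles in its generating set: its construction adds new cycles at every length $i$, and for a $3$-connected transitive planar graph with accumulation points in the plane there is no reason the resulting nested canonical set should consist of cycles of boundedly many lengths, hence of finitely many orbits. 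So applying Theorem~\ref{thm_canGen} to each $3$-connected torso does not finish the proof; the passage from ``canonical'' to ``finitely many orbits'' is exactly the hard content of the theorem you are trying to prove.

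The paper closes this gap with an extra induction that your proposal has no substitute for. If the ($3$-connected) graph is VAP-free, the finite face boundaries generate and form finitely many orbits (Proposition~\ref{prop_VAPFreeAccess}); this is the base case. Otherwise one picks, inside the canonical nested generating set of Theorem~\ref{thm_mainCWNested}, a single cycle $C$ both of whose faces contain infinitely many vertices, so that the orbit $\{C\alpha\mid\alpha\in\Aut(G)\}$ is a nested family of separating cycles. One then passes to the maximal subgraphs $H$ not disconnected by any $C\alpha$; these fall into finitely many orbits and have strictly smaller degree sequence of orbits (Lemma~\ref{lem_SmallerDegSequ}), which is the quantity the induction runs on (Lemma~\ref{lem_IndOnDegSequ}). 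Finally, one must still show that the union of the pieces' generating sets generates everything, which the paper does by a second induction on the number of cycles $C\beta$ crossing a given indecomposable closed walk (using Proposition~\ref{prop_MuFinite} and the nestedness of the orbit of~$C$). None of this is supplied by your Mayer--Vietoris assembly along the Tutte tree, because the pieces there are the torsos themselves, for which the finiteness of orbits of generators is exactly what remains unproved; the paper's decomposition into the subgraphs $H$ cut out by the orbit of one separating cycle is a different, genuinely needed device.
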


Note that Theorems~\ref{thm_canGen} and~\ref{thm_finGenIntro} are easy to prove if the graph has no accumulation points in the plane, i.\,e.\ if it is \emph{VAP-free}, as you may then take the finite face boundaries as generating set, see e.\,g.\ \cite[Lemma 3.2]{G-PlanarCayleyGraphsAndComplexes}.

In addition to Theorems~\ref{thm_canGen} and~\ref{thm_finGenIntro}, we prove their analogues for the set of all closed walks instead of the first homology group.
From this variant of Theorem~\ref{thm_finGenIntro} for closed walks, we shall deduce the following theorem about fundamental groups of planar Cayley graphs, in which, for a set~$S$, we denote by $\FF_S$ the free group with free generating set~$S$.

\begin{thm}\label{thm_main}
Let $G$ be a locally finite planar Cayley graph of a finitely generated group~$\Gamma=\langle S\mid R\rangle$.
Then the fundamental group of~$G$ is a finitely generated $\FF_S$-module.
\end{thm}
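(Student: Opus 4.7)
The plan is to apply the closed-walks analogue of Theorem~\ref{thm_finGenIntro} promised in the introduction, and to translate its conclusion into algebraic data about the Cayley graph. First I recast the fundamental group algebraically. The presentation gives a surjection $\pi\colon\FF_S\twoheadrightarrow\Gamma$ with kernel $N\normal\FF_S$; reading off edge-labels identifies each closed walk at the identity vertex $e$ with an element of $N$, and under concatenation this bijection is an isomorphism $\pi_1(G,e)\isom N$. The $\FF_S$-module structure on $\pi_1(G)$ is then the conjugation action of $\FF_S$ on its normal subgroup $N$, and the theorem amounts to exhibiting a finite set $R_0\sub N$ whose normal closure in $\FF_S$ equals $N$~-- equivalently, to presenting $\Gamma$ as $\langle S\mid R_0\rangle$, which is exactly the form in which Droms's theorem is recovered.

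Next I apply the closed-walks variant of Theorem~\ref{thm_finGenIntro} to produce a set $\CF$ of closed walks that generates every closed walk of $G$ (in the fundamental groupoid: by concatenation at a common vertex, insertion and deletion of spurs, and conjugation of a walk by a path between basepoints) and that consists of only finitely many $\Aut(G)$-orbits. Since $G$ is a locally finite Cayley graph, $\Gamma\sub\Aut(G)$ acts simply transitively on vertices by label-preserving automorphisms, the vertex stabiliser in $\Aut(G)$ is compact, and the stabiliser of any particular finite closed walk is open in it; consequently each $\Aut(G)$-orbit meets the closed walks based at $e$ in only finitely many words. Collecting these representatives yields finitely many closed walks $W_1,\dots,W_k$ at $e$ with labels $r_1,\dots,r_k\in N$, and I set $R_0=\{r_1,\dots,r_k\}$.

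For the final step I verify $\langle\langle R_0\rangle\rangle^{\FF_S}=N$. Given $n\in N$ with corresponding closed walk $W_n$ at $e$, the generating property of $\CF$ expresses $W_n$ through concatenation, spur operations, and basepoint changes applied to the $W_i$ and their $\Aut(G)$-translates. Each operation has an $\FF_S$-counterpart on the labels: concatenation at a common vertex multiplies labels; spurs are free reductions and leave the label unchanged; a $\Gamma$-translate of a walk carries the same label as the original; and conjugation of a closed walk by a path labelled $w\in\FF_S$ becomes $\FF_S$-conjugation by $w$. Assembling these writes $n$ as a product of $\FF_S$-conjugates of $r_i^{\pm1}$, so $n\in\langle\langle R_0\rangle\rangle^{\FF_S}$, as required. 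The delicate point, and the main obstacle, is the passage from finitely many $\Aut(G)$-orbits to finitely many label-words at $e$: automorphisms in $\Aut(G)\sm\Gamma$ may permute edge-labels and do not correspond directly to $\FF_S$-operations, so one must argue carefully~-- via compactness of the vertex stabiliser together with local finiteness~-- that their walks based at $e$ contribute only finitely many elements of $N$, and that every operation used by the closed-walks analogue can be matched to an operation in the normal closure inside $\FF_S$.
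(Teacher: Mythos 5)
Your proof is correct, and its skeleton is the paper's: take a generating set of $\WF(G)$ lying in finitely many orbits and push it to $\pi_1(G)$ by conjugating each walk to the base vertex (this is Proposition~\ref{prop_GenPi}); your translation of sums, rotations, spikes and translates into multiplication, conjugation and equality of labels in $N=\ker(\FF_S\to\Gamma)$ is exactly what that proposition encodes. The genuine difference is which orbit theorem you start from. You invoke the closed-walk analogue of Theorem~\ref{thm_finGenIntro}, i.e.\ finitely many $\Aut(G)$-orbits, and therefore need your bridging step: an $\Aut(G)$-orbit of a finite closed walk meets the walks based at~$e$ in an orbit of the compact vertex stabiliser acting on a finite ball, hence in finitely many walks, each of which is a $\Gamma$-translate of one of your representatives $W_1,\dots,W_k$; this argument is sound and correctly disposes of the label-permuting automorphisms in $\Aut(G)\sm\Gamma$. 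The paper never meets this point: Theorem~\ref{thm_PlanarFiniteModule} is proved for an arbitrary group acting with finitely many orbits, so in the proof of Theorem~\ref{thm_PiFinGen} it is applied directly to the simply transitive action of $\Gamma$ (equivalently of $\FF_S$) on the Cayley graph, and the generating set of $\WF(G)$ already comes in finitely many $\FF_S$-orbits; all that remains there is to check that different connecting paths $P_W$, $Q_W$ yield homotopy classes in the same $\FF_S$-orbit. So your route costs an extra (correct) compactness argument but buys nothing that the general form of Theorem~\ref{thm_PlanarFiniteModule} does not already give; on the other hand, your reformulation via $\pi_1(G,e)\isom N$ and normal closures outputs a finite presentation of~$\Gamma$ directly, which the paper only extracts afterwards, via the same bijection, when deducing Droms's theorem.
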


Note that Droms's theorem about the finite presentability of planar groups follows directly from Theorem~\ref{thm_main}.

\medskip

Theorem~\ref{thm_canGen} has various analogues in the literature:
in~\cite{H-GeneratingCycleSpace} the author proved the corresponding result for the cycle space\footnote{The \emph{cycle space} of a graph is the set of finite sums of edge sets of cycles over~$\mathbb{F}_2$.} of $3$-connected finitely separable planar graphs and, previously, Dicks and Dunwoody~\cite{DiDu-GroupsGraphs} proved the analogous result for the cut space\footnote{The \emph{cut space} of a graph is the set of finite sums  over $\mathbb F_2$ of minimal separating edge sets.} of arbitrary graphs.

The mentioned theorem of Dicks and Dunwoody is one of the central theorems for the investigation of transitive graphs with more than one end and hence of accessible graphs and of accessible groups.
(We refer to Section~\ref{sec_Access} for definitions.)
Even though, accessibility has a priori more in common with the cut space than with the cycle space or the first homology group, the main result of~\cite{H-Accessibility} exhibited a connection between accessibility and the cycle space:

\begin{thm}\label{thm_accessMain}\cite{H-Accessibility}
Every transitive graph $G$ whose cycle space is a finitely generated $\Aut(G)$-module is accessible.
\end{thm}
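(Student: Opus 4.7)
The plan is to deduce accessibility of $G$ by producing a uniform bound on the sizes of edge separations between any two ends of $G$; this is how accessibility is characterised in Section~\ref{sec_Access}, and matches the Thomassen--Woess characterisation in the locally finite case.

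Let $C_1,\dots,C_n$ be cycles whose $\Aut(G)$-translates generate the cycle space as an $\Aut(G)$-module, and set $k:=\max_i |E(C_i)|$. I would first apply the Dicks--Dunwoody cut-space theorem quoted in the introduction to obtain a nested, $\Aut(G)$-invariant set of minimal cuts separating the ends of~$G$; these are the edges of a structure tree $T$ on which $\Aut(G)$ acts. The key step is then to bound the sizes of the tight cuts appearing as edges of $T$ by a function of $k$. The tool is the $\mathbb{F}_2$-pairing between cycle space and cut space: every cycle meets every cut in an even number of edges. If $F$ is a tight cut separating two ends and $|F|$ is very large, then many $\Aut(G)$-translates of the $C_i$ have to contribute at least two edges each to $F$, but the minimality of $F$ together with the nestedness of the Dicks--Dunwoody family severely restricts which such translates can actually cross $F$. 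Finite generation of the cycle space by $C_1,\dots,C_n$ should then force a uniform bound $|F|\le M(k)$.

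Once the tight cuts in $T$ have bounded size, transitivity of $G$ forces $\Aut(G)$ to act with finitely many orbits on $E(T)$, which is exactly the accessibility condition. The main obstacle is the bounding step, because cycle length does not control cut size in general (for instance in the square lattice $\Z^2$ the cycle space is generated by $4$-cycles yet cuts can be arbitrarily large). One therefore has to exploit decisively that the cuts in question separate ends and come from a nested, minimal family: translating the algebraic hypothesis on the cycle space into this combinatorial bound on tight end-separating cuts is the main technical content of the proof.
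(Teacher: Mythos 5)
There is a genuine gap, and you have in fact pointed at it yourself: the step ``finite generation of the cycle space by $C_1,\dots,C_n$ should then force a uniform bound $|F|\le M(k)$'' is not an auxiliary detail but \emph{is} the theorem. Note first that this statement is not proved in the present paper at all; it is quoted from the author's separate paper \cite{H-Accessibility}, so your proposal has to be judged as a proof from scratch, and as such it stops exactly where the real work begins: no mechanism is given that converts the parity pairing (every cycle meets every cut evenly) plus minimality and nestedness into a bound on $|F|$, and your own example of $\Z^2$ shows that the pairing alone cannot do it. Saying that the restrictions ``severely restrict which translates can cross $F$'' is a hope, not an argument.

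Moreover, the way you set the argument up is close to circular. The Dicks--Dunwoody machinery produces, for each fixed $k$, a nested $\Aut(G)$-invariant family of cuts of size at most $k$ and an associated structure tree; it does not hand you a single tree whose edge-cuts separate \emph{all} pairs of ends unless you already know that some fixed $k$ suffices to separate any two ends. For locally finite transitive graphs, that is precisely (edge-)accessibility, via the Thomassen--Woess equivalence you invoke. So the object you propose to start from --- one structure tree $T$ whose tight cuts separate the ends, to which you would then apply the bounding step --- presupposes the conclusion. The proof in \cite{H-Accessibility} does not proceed by bounding independently obtained structure-tree cuts; it extracts the relevant nested separations from the finitely many orbits of generating cycles themselves, which is why the bounded length of those cycles enters from the start rather than as an afterthought. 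To repair your outline you would need either to carry out the bounding argument for a fixed tight cut $F$ without assuming a global tree (i.e.\ show directly that a minimal cut separating two ends has size bounded in terms of $k$ and the vertex degree, using that every generating translate crossing $F$ does so in at least two edges and that a too-large $F$ yields a closed walk not generated by short cycles), or to build the nested cut system out of the cycles as in the cited paper.
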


As an application of our results and Theorem~\ref{thm_accessMain} we shall obtain Dunwoody's~\cite{D-PlanarGraphsAndCovers} theorem that locally finite transitive planar graphs are accessible.

The proofs for Theorems~\ref{thm_canGen} and~\ref{thm_finGenIntro} and their variants for closed walks are very similar.
That is, why we present only the proof for the more involved case of closed walks and then discuss in Section~\ref{sec_hom} the situation for the first homology group.
Readers that are familiar with the first homology group can already verify during the reading of the first sections that the results there stays valid for the first homology group.

\section{Indecomposable closed walks}

The \emph{sum} of two walks $W_1,W_2$ where $W_1$ ends at the starting vertex of~$W_2$ is their concatenation.
Let $W=x_1x_2\ldots x_n$ be a walk.
By $W\inv$ we denote its \emph{inverse} $x_n\ldots x_1$.
For $i<j$, we denote by $x_iWx_j$ the subwalk $x_i\ldots x_j$.
If $x_{i-1}=x_{i+1}$ for some~$i$, we call the walk $W':=x_1\ldots x_{i-1}x_{i+2}\ldots x_n$ a \emph{reduction} of~$W$.
Conversely, we \emph{add} the \emph{spike} $x_{i-1}x_ix_{i+1}$ to~$W'$ to obtain~$W$.
If $W$ is a closed walk, we call $x_i\ldots x_nx_1\ldots x_{i-1}$ a \emph{rotation} of~$W$.
By $\WF(G)$ we denote the set of all closed walks.

Let $\VF$ be a set of closed walks.
The smallest set $\VF'\supseteq \VF$ of closed walks that is invariant under taking sums, reductions and rotations and under adding spikes is the set of closed walks \emph{generated by $\VF$}.
We also say that any $V\in\VF'$ is \emph{generated by~$\VF$}.
A closed walk is \emph{indecomposable} if it is not generated by closed walks of strictly smaller length.
Note that no indecomposable closed walk $W$ has a \emph{shortcut}, i.\,e.\ a (possibly trivial) path between any two of its vertices that has smaller length than any subwalk of any rotation of~$W$ between them.
Indeed, let~$P$ be a shortest shortcut of~$W$ and $Q_1,Q_2$ be two subwalks of~$W$ whose end vertices are those of~$W$ and whose concatenation is~$W$.
Then $Q_1 P$ and $P\inv Q_2$ sum to a closed walk that has $W$ as a reduction.
As shortcuts may be trivial, we immediately obtain the following.

\begin{rem}\label{rem_indecCW=C}
Every indecomposable closed walk is a cycle.
\end{rem}

Let $G$ be a planar graph.
The {\em spin} of a vertex $x\in V(G)$ is the cyclic order of the set of edges incident with~$x$ in clockwise order.
Let $R=x_0\ldots x_\ell$ and $W=y_1\ldots y_\ell$ be two walks in a planar graph~$G$ such that $x_i=y_i$ for all $1\leq i\leq \ell-1$.
We call $R$ a \emph{crossing} of~$W$ if one of the following holds:
\begin{enumerate}[(i)]
\item the edges $x_0x_1,x_1x_2,y_0x_1$ are contained in this order in the spin of~$x_1$ and $x_{\ell-2}x_{\ell-1},x_{\ell-1}y_\ell,x_{\ell-1}x_\ell$ are contained in this order in the spin of~$x_{\ell-1}$;
\item the edges $y_0x_1,x_1x_2,x_0x_1$ are contained in this order in the spin of~$x_1$ and $x_{\ell-2}x_{\ell-1},x_{\ell-1}x_\ell,x_{\ell-1}y_\ell$ are contained in this order in the spin of~$x_{\ell-1}$.
\end{enumerate}
These crossing are shown in Figure~\ref{fig_cross}.
Note that this definition is symmetric in~$R$ and~$W$.
So $R$ is a crossing of~$W$ if and only if $W$ is a crossing of~$R$.

\begin{figure}[h]
\begin{center}
\mbox{}\hfill
\includegraphics[width=.35\textwidth]{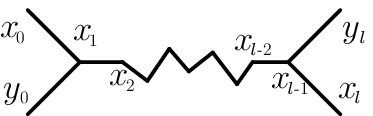}\hfill
\includegraphics[width=.35\textwidth]{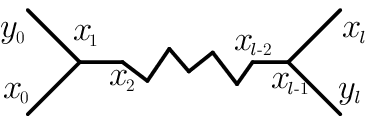}
\hfill
\mbox{}\caption[Figure 1]{The two possible crossings}\label{fig_cross}
\end{center}\end{figure}

For a closed walk $W$ and $n\in\nat$ let $W^n$ be the $n$-times concatenation of~$W$.
Two closed walks $R$ and $W$ \emph{cross} if there are $i,j\in\nat$ such that $R^i$ contains a crossing of a subwalk of~$W^j$.
They are \emph{nested} if they do not cross.

\begin{lem}\label{lem_indecomp}
Let $G$ be a planar graph and let $W_1,W_2\sub G$ be two indecomposable closed walks of lengths $n_1,n_2$, respectively.
Let $P_1\sub W_1$ be a non-trivial subwalk of shortest length that meets $W_2$ in precisely its end vertices.
Let $P_2\sub W_2$ be a shortest walk with the same end vertices as~$P_1$.
Then one of the following is true.
\begin{enumerate}[\rm (i)]
\item\label{itm_indecomp1} $|P_1|=|P_2|$ and $P_2$ meets $W_1$ only in its end vertices;
\item\label{itm_indecomp2} $|P_1|\geq|P_2|$ and $P_1 P_2\inv$ is a rotation of~$W_1$;
\item\label{itm_indecomp3} $|P_1|\geq|P_2|$ and $(W_1-P_1)P_2$ is a rotation of~$W_2$ or $W_2\inv$.
\end{enumerate}
\end{lem}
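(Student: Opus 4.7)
The argument rests on the no-shortcut property of indecomposable closed walks stated just before Remark~\ref{rem_indecCW=C}: for any two vertices $x,y$ on an indecomposable closed walk $W$, every path in $G$ between $x$ and $y$ has length at least that of the shorter arc of $W$ between them. Write $u,v$ for the common endpoints of $P_1,P_2$ and set $R_1:=W_1-P_1$, $R_2:=W_2-P_2$.

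Applying no-shortcut to $W_2$ with the path $P_1$ yields $|P_1|\geq|P_2|$, since $P_2$ is the shorter arc of $W_2$ between $u$ and $v$; this gives the inequality appearing in (ii) and (iii). Next I would establish $|R_1|\geq|P_1|$: if $R_1$ meets $W_2$ precisely at its endpoints then this is direct from the minimality defining $P_1$; otherwise, the initial segment of $R_1$ from $v$ to the first interior vertex on $V(W_2)$ is a strictly shorter subwalk of $W_1$ meeting $W_2$ precisely at its endpoints, and minimality gives that its length is $\geq|P_1|$, forcing $|R_1|>|P_1|$. Applying no-shortcut to $W_1$ with $P_2$ then yields $|P_2|\geq\min(|P_1|,|R_1|)=|P_1|$, so $|P_1|=|P_2|$.

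To conclude (i), I would show that $P_2$ meets $W_1$ only at $u,v$. Suppose otherwise and let $w$ be an interior vertex of $P_2$ on $V(W_1)$. Since the interior of $P_1$ is disjoint from $V(W_2)\supseteq V(P_2)\ni w$ and $w\notin\{u,v\}$, the vertex $w$ must lie in the interior of $R_1$. Let $i_w$ be the position of $w$ along $R_1$ measured from $v$. Applying minimality of $P_1$ to the segments of $R_1$ from $v$ to $w$ and from $w$ to $u$ (taking the first $V(W_2)$-vertex on each side if needed) gives $|P_1|\leq i_w\leq|R_1|-|P_1|$. Simultaneously, the subwalk of $P_2$ from $u$ to $w$ is a path between two vertices of $W_1$ of length strictly less than $|P_2|=|P_1|$; by no-shortcut for $W_1$, its length is at least the shorter arc of $W_1$ between $u$ and $w$, namely $\min(|P_1|+i_w,|R_1|-i_w)$; the first candidate is $\geq|P_1|$, so the minimum must be $|R_1|-i_w$, forcing $i_w>|R_1|-|P_1|$. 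This contradicts the earlier bound and establishes (i). In the boundary situations where $W_1$ and $W_2$ share a common arc, namely $R_1=P_2\inv$ or $R_1=R_2$, the initial-segment argument degenerates (the interior of $R_1$ is forced entirely into $V(W_2)$), and the shared structure directly yields case (ii) respectively (iii).

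The main obstacle is the final combinatorial case analysis: distinguishing (ii) from (iii) in the boundary situations requires matching the cyclic walk $R_1\cdot P_2$ (or $P_1\cdot P_2\inv$) with $W_2$ (resp.\ $W_1$) as cyclic sequences, not merely verifying that their lengths coincide, and correctly handling the orientation that produces the $W_2\inv$ alternative in (iii). Planarity itself plays no role in this lemma beyond providing the ambient setting; the notion of crossing, which depends on the planar embedding, only enters in subsequent results.
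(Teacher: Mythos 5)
Your opening step ($|P_1|\ge|P_2|$ via the no-shortcut property of~$W_2$) agrees with the paper, but the pivotal claim of your second paragraph~-- $|W_1-P_1|\ge|P_1|$ and hence $|P_1|=|P_2|$ in all cases~-- is not justified, and it fails exactly in the situations that alternatives (\ref{itm_indecomp2}) and (\ref{itm_indecomp3}) exist for. The minimality defining $P_1$ only compares against subwalks of~$W_1$ that meet $W_2$ in \emph{precisely} their end vertices; when $W_1-P_1$ runs along~$W_2$ (the shared-arc situation), the ``initial segment up to the first $W_2$-vertex'' is a single edge lying on~$W_2$, so it is not a legitimate competitor and yields no lower bound on $|W_1-P_1|$. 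Concretely, take the theta graph on two branch vertices $a,b$ joined by internally disjoint paths $A,B,C$ of lengths $2,3,4$, and let $W_2=A\cup B$, $W_1=A\cup C$: every closed walk of length at most~$5$ is either null-homotopic or homologous to $\pm[A\cup B]$, so both walks are indecomposable; here $P_1=C$ has length~$4$, $P_2=A$ has length~$2$, case~(\ref{itm_indecomp2}) holds and $|P_1|>|P_2|$. Your argument would ``prove'' $|P_1|=|P_2|$ in this example because it treats the edges of~$A$, which lie on~$W_2$, as admissible competitors for~$P_1$; if one did adopt that reading, $P_1$ would always be a single shared edge and cases (\ref{itm_indecomp2}), (\ref{itm_indecomp3}) could never occur with strict inequality, contradicting the way the lemma is used later (the proof of Lemma~\ref{lem_nestedCorners} explicitly allows $|P_i|\neq|Q_i|$). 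The same defect infects your $i_w$-argument, which again applies the minimality of~$P_1$ to segments of $W_1-P_1$ that may lie on~$W_2$; the paper's corresponding step is arranged precisely to avoid this, by producing a competitor that contains an edge \emph{off}~$W_2$ (using that $W_2$ is a cycle).

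The second gap is that your closing sentence about ``boundary situations'' asserts exactly what cases (\ref{itm_indecomp2}) and (\ref{itm_indecomp3}) require one to prove: that whenever conclusion~(\ref{itm_indecomp1}) fails, $W_1$ and $W_2$ share a whole arc, i.e.\ $W_1-P_1$ equals $P_2\inv$ or the arc $W_2-P_2$. You never derive this dichotomy, and it does not follow from what precedes it once the equality $|P_1|=|P_2|$ is withdrawn. In the paper this is the substance of the last part of the proof: after splitting off the case $P_2\sub W_1$ (which gives~(\ref{itm_indecomp2})) and excluding inner vertices of~$P_2$ on~$W_1$, the no-shortcut property of~$W_1$ forces $|P_2|=|P_1|$ or $|P_2|=|W_1-P_1|$; the first option gives~(\ref{itm_indecomp1}), while in the second the minimality of~$|P_1|$ is used to force $W_1-P_1$ onto~$W_2$, which is what yields~(\ref{itm_indecomp3}). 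Your proposal replaces this case analysis by an unconditional equality that is false in general, so it does not establish the lemma.
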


\begin{proof}
Let $v,w$ be the end vertices of~$P_1$ and recall from Remark~\ref{rem_indecCW=C} that $W_1$ and $W_2$ are cycles.
Let $Q_1$ and~$Q_2$ are the two subpaths of~$W_2$ with end vertices $v$ and~$w$.\footnote{Strictly speaking, one is just a subwalk of a rotation of the reflection of~$C_2$.}
Then $W_2$ is a reduction of a rotation of the sum of $vP_1wQ_1\inv v$ and $vQ_2wP_1\inv v$.
First, assume $|P_1|<|P_2|$.
By the choice of~$P_2$, we have $|P_2|\leq|Q_1|$ and $|P_2|\leq|Q_2|$, so $P_1$ is a shortcut of~$W_2$, which is impossible.
Hence, we have $|P_1|\geq|P_2|$.

If $P_2$ is a subwalk of~$W_1$, then we directly have that $P_1P_2$ is a rotation of~$W_1$ and~(\ref{itm_indecomp2}) holds.
So we may assume that $P_2$ contains an edge outside of~$W_1$.

Let us suppose that $P_2$ has an inner vertex on~$W_1$.
So any subwalk $xP_2y$ that intersects $W_1$ in precisely its end vertices has shorter length than~$P_2$ and hence has shorter length than~$P_1$.
Note that such a subpath exists as~$P_2$ has an edge outside~$W_1$.
But $xP_2y$ cannot be a shortcut of~$W_1$.
So the distance between $x$ and~$y$ on~$W_1$ is at most $|xP_2y|$.
The subpath $Q$ of~$W_1$ realising the distance of~$x$ and~$y$ on~$W_1$ together with $xP_2y$ does not contain~$v$ and~$w$.
So it cannot be~$W_2$.
As $W_2$ is a cycle, some edge of~$Q$ does not lie on~$W_2$ and hence~$Q$ contains some subwalk that contradicts the choice of~$P_1$.

So $P_2$ meets $W_1$ only in its end vertices.
Then $W_1$ is a reduction of the sum of $(W_1-P_1)P_2$ and $P_2\inv P_1$.
As $W_1$ is indecomposable, $P_2$ is not a shortcut of~$W_1$ and thus we have either $|P_2|=|P_1|$ or $|P_2|=|W_1-P_1|$.
The first case implies (\ref{itm_indecomp1}) while, if the first case does not hold, we have $|P_1|>|P_2|=|W_1-P_1|$.
Thus, the minimality of~$|P_1|$ implies that $W_1-P_1$ lies on~$W_2$.
So we have that $(W_1-P_1) P_2$ is a rotation of~$W_2$  or $W_2\inv$ as~$P_2$ meets $W_1$ only in its end vertices.
This shows (\ref{itm_indecomp3}) in this situation.
\end{proof}

If $C$ is a cycle in a planar graph~$G$, we denote by $f_C^0$ the bounded face of~$C$ and by $f_C^1$ the unbounded face.

For two closed walks $C,D$ of~$G$, we call a non-trivial maximal subwalk $P$ of~$C$ that has precisely its end vertices in~$D$ a \emph{$D$-path in~$C$}.
By $n(C,D)$ we denote the number of $C$-paths in~$D$.

\begin{lem}\label{lem_nestedCorners}
Let $G$ be a planar graph and let $C,D\sub G$ be two indecomposable closed walks.
Then there are nested indecomposable closed walks $\widetilde{C}$ and $\widetilde{D}$ with $|C|=|\widetilde{C}|$ and $|D|=|\widetilde{D}|$ that are either the boundaries of $f_C^0\cap f_D^0$ and of $f_C^1\cap f_D^1$ or the boundaries of $f_C^0\cap f_D^1$ and $f_C^1\cap f_D^0$.

In addition, we may choose $\widetilde{C}$ and $\widetilde{D}$ so that, if $\EF$ is a set of closed walks gene\-rating all closed walks of length smaller than~$|C|$, then $\EF$ generates $C$ or~$D$ as soon as it generates $\widetilde{C}$ or~$\widetilde{D}$.
\end{lem}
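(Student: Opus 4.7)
By Remark~\ref{rem_indecCW=C}, both $C$ and $D$ are cycles. The plan is to realise $\widetilde{C}$ and $\widetilde{D}$ as closed walks bounding two of the four face intersections $f_C^\varepsilon\cap f_D^\delta$ in the planar subgraph $H=C\cup D$; the two admissible outcomes correspond to the two diagonal pairs listed in the statement. If $C$ and $D$ are already nested, I simply set $\widetilde{C}=C$ and $\widetilde{D}=D$.

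Assume therefore that $C$ and $D$ cross. Let $P_1$ be a shortest $D$-path in $C$ and let $P_2$ be a shortest walk in $D$ with the same endpoints as~$P_1$, and apply Lemma~\ref{lem_indecomp} to $C,D,P_1,P_2$. Because $C$ and $D$ cross, there is some further $D$-path of $C$ lying in the face of $D$ opposite to~$P_1$; this rules out cases (ii) and (iii) of Lemma~\ref{lem_indecomp}, each of which would force $C-P_1\sub D$ and hence nestedness of $C$ and~$D$. So case (i) applies: $|P_1|=|P_2|$ and $P_2$ is itself a $C$-path in~$D$. The elementary swap
\[
\widetilde{C}^{(1)}:=(C-P_1)\cup P_2,\qquad \widetilde{D}^{(1)}:=(D-P_2)\cup P_1
\]
gives closed walks of unchanged lengths, geometrically rerouting $C$ and $D$ across the chord $P_1\cup P_2$. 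I then iterate the construction (equivalently, induct on the number of $D$-paths of $C$ sitting in a fixed face of~$D$), choosing swap directions consistently so that the resulting walks $\widetilde{C},\widetilde{D}$ bound one of the two diagonal pairs of face intersections. Indecomposability of $\widetilde{C}$ and $\widetilde{D}$ transfers from $C$ and $D$, since any shortcut lifts through the swaps to a shortcut of $C$ or $D$.

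For the generation clause, each elementary swap $C\leftrightarrow\widetilde{C}^{(1)}$ is witnessed by the closed walk $P_1 P_2^{-1}$, of length $2|P_1|$. Since $P_1$ is the shortest $D$-path of $C$ and $C$ has at least two $D$-paths, $2|P_1|\leq|C|$, with strict inequality as soon as there are three or more such paths. When the inequality is strict, $P_1 P_2^{-1}$ is generated by $\EF$, so generation of $\widetilde{C}$ by $\EF$ transfers to $C$ (and analogously from $\widetilde{D}$ to $D$). I expect the main technical obstacle to be the boundary case in which $C$ and $D$ cross exactly twice and $2|P_1|=|C|$: here the auxiliary closed walk has length exactly $|C|$, and the disjunction ``generates $C$ or $D$'' in the statement must be exploited, passing through whichever of the two swap directions has its auxiliary walk strictly shorter (using any asymmetry between $|C|$ and $|D|$, and in the fully symmetric case invoking indecomposability to avoid calling upon a length-$|C|$ intermediate walk).
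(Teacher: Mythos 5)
Your construction itself---rerouting $C$ and $D$ along the equal-length arc pairs supplied by case (i) of Lemma~\ref{lem_indecomp}, after excluding cases (ii) and (iii)---is essentially the paper's construction, and the nested case is handled identically. The genuine gap is in the generation clause. To transfer generation from $\widetilde{C}$ back to $C$ you must undo \emph{every} exchange that was performed to produce the boundary of the face intersection, i.e.\ add the walk $P_iQ_i\inv$ for every swapped arc pair, and these involve arbitrary $D$-paths of $C$, not only the shortest one. Your bound $2|P_1|\le |C|$ controls only the first auxiliary walk: among the crossing arcs there can be one with $2|P_i|>|C|$ (already with three arcs of lengths $1,1,10$, say), and if that long arc lies on the wrong side of $D$ it has to be exchanged no matter in which order the swaps are carried out; if instead you mean to re-apply Lemma~\ref{lem_indecomp} to the current pair at each step, you need the intermediate walks to be indecomposable, which you never establish. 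So the problematic situation is not confined to ``exactly two crossings with $2|P_1|=|C|$'', and for the hard case you offer an expectation rather than an argument. This is exactly what the paper's case distinction is for: it fixes the pairing $(P_i,Q_i)$ of all crossing arcs of the original $C$ and $D$ (with $|P_i|=|Q_i|$ for all but at most one $i$) and counts how many exchange walks $P_i\inv Q_i$ have length at least $\min\{|C|,|D|\}$. If none, it recovers $C$; if exactly one, it recovers $C$ \emph{or} $D$ according to the side on which the long pair lies---this is where the disjunction ``$\EF$ generates $C$ or $D$'' comes from, and it is a move absent from your proposal, which only ever transfers $\widetilde{C}\to C$ and $\widetilde{D}\to D$; if at least two, it shows $n=2$, notes that the other diagonal pair $C'',D''$ could have been chosen as output, and uses that $C'(C'')\inv$ reduces to $C$ or $D$.

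A second gap is the indecomposability of $\widetilde{C}$ and $\widetilde{D}$ (and of your intermediate walks). A shortcut of $\widetilde{C}$ joining an inner vertex of a $C$-arc to an inner vertex of a $D$-arc is not a path between two vertices of $C$, nor between two vertices of $D$, so it does not ``lift through the swaps'' as claimed. More importantly, even if $\widetilde{C}$ had no shortcut this would not make it indecomposable: the paper proves only that indecomposable closed walks have no shortcut, and the converse fails (a hexagon with a centre vertex joined by edges to three alternating hexagon vertices has no shortcut, yet it is generated by three $4$-cycles). The paper does not argue this way; it absorbs a possible failure of indecomposability of the constructed boundary into the generation argument, observing that a decomposable $C'$ is generated by closed walks of length smaller than $|C|$ and hence by $\EF$, after which the case analysis above still yields that $\EF$ generates $C$ or $D$.
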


\begin{proof}
If $C$ and~$D$ are nested, then the assertion holds trivially.
This covers the situation that $n(D,C)$ is either~$0$ or~$1$, as $n(C,D)\in\{0,1\}$ implies that $C$ and~$D$ are nested.
In particular, we may assume that $C$ contains some smallest $D$-path $P_1$.
Note that the cases (\ref{itm_indecomp2}) and (\ref{itm_indecomp3}) of Lemma~\ref{lem_indecomp} imply $n(D,C)=1$.
Hence, Lemma~\ref{lem_indecomp} implies that $D$ contains a $C$-path $Q_1$ with the same end vertices as~$P_1$ and with $|P_1|=|Q_1|$.
By definition, neither $P_1$ nor $Q_1$ has an inner vertex that lies in~$D$ or~$C$, respectively.
Let $D':=D$ and let $C'$ be obtained from~$C$ by replacing $P_1$ with~$Q_1$.
Recursively, we obtain two sequences $(P_i)_{i\leq n}$ and $(Q_i)_{i\leq n}$ of $D$-paths in~$C$ and $C$-paths in~$D$, respectively, which are ordered by the length of the paths~$P_i$.
Note that~-- just as above~-- Lemma~\ref{lem_indecomp} ensures $|P_i|=|Q_i|$ for all but at most one $i\leq n$.
(The case with $|P_i|\neq|Q_i|$ occurs if $C$ and~$D$ are nested and either (\ref{itm_indecomp2}) or~(\ref{itm_indecomp3}) of Lemma~\ref{lem_indecomp} holds.)

Consider a cyclic ordering of~$C$ and let $i_1,\ldots,i_n\in\{1,\ldots,n\}$ be pairwise distinct such that $P_{i_1},\ldots,P_{i_n}$ appear on~$C$ in this order.
Then, using planarity, it immediately follows by their definitions as $C$-path or $D$-path, respectively, that $Q_{i_1},\ldots,Q_{i_n}$ appear in this order on~$D$.
Note that one face of $P_i\cup Q_i$ contains no vertices or edges of $C\cup D$.
The assertion follows except for the fact that the obtained closed walks are indecomposable and the additional statement.

Let $\EF$ be a set of closed walks generating all closed walks of length smaller than~$|C|$.
Assume that the boundaries $C'$ and $D'$ of $f_C^0\cap f_D^0$ and $f_C^1\cap f_D^1$, respectively, have the desired property up to being indecomposable.
Let us assume that $C'$ is generated by~$\EF$.
(Note that this covers also the case that $C'$ is not indecomposable.)

If all closed walks $P_i\inv Q_i$ have length less than $|C|$ and~$|D|$, then we add every closed walk $P_i\inv Q_i$ to~$C'$ -- after the canonical rotation -- for which $Q_i$ lies on the boundary of $f_C^0\cap f_D^0$ and we consider the smallest reduction. Thereby, we obtain~$C$.
So $C$ is generated by~$\EF$ as $C'$ and all of the added closed walks are generated by~$\EF$.

If all but exactly one of the closed walks $P_i\inv Q_i$ have length less than $|C|$ and~$|D|$, then $P_n\inv Q_n$ has largest length of all those closed walks.
If $P_n$ lies on the boundary of $f_C^0\cap f_D^0$, then we add every closed walk $P_i\inv Q_i$ to~$C'$ for which $Q_i$ lies on the boundary of $f_C^0\cap f_D^0$ and consider the smallest reduction.
As before, we obtain that $C$ is generated by~$\EF$.
If $P_n$ lies on the boundary of $f_C^1\cap f_D^1$, then we add every closed walk $P_i Q_i\inv$ to~$C'$ for which $P_i$ lies on the boundary of $f_C^0\cap f_D^0$ and obtain~$D$ and consider the smallest reduction.
So $D$ is generated by~$\EF$.

If at least two closed walks $P_i\cup Q_i$ have length at least $\min\{|C|,|D|\}$, then $n=2$ follows immediately.
Hence, the boundaries $C''$ and $D''$ of $f_C^0\cap f_D^1$ and $f_C^1\cap f_D^0$, respectively, are cycles.
So we may have chosen them instead of~$C'$ and~$D'$.
If one of them, $C''$ say, is generated by~$\EF$, too, then $C'(C'')\inv$ is generated by~$\EF$.
As this sum reduces to either~$C$ or~$D$, the assertion follows.
\end{proof}

Note that it follows from the proof of Lemma~\ref{lem_nestedCorners} that there is a canonical bijection between the $C$-paths in~$D$ and the $D$-paths in~$C$.
In particular, we have $n(C,D)=n(D,C)$.

\section{Counting crossing cycles}

Our restiction to finitely separable graphs implies that each cycle in such a planar graph is nested with all but finitely many cycles of bounded length, which directly carries over to indecomposable closed walks.\footnote{As cycles define closed walk canonically, nestedness of closed walks carries over to cycles in the obvious way. Equivalently, two cycles are \emph{nested} if neither has vertices or edges in both faces of the other and vice versa.}
Without finite separability this need not be true.

\begin{prop}\label{prop_MuFinite}
Let $i\in\nat$.
Every cycle in a finitely separable planar graph is nested with all but finitely many cycles of length at most~$i$.
\end{prop}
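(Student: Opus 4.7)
The plan is to argue by contradiction: fix a cycle~$C$ in the finitely separable planar graph~$G$ and suppose that infinitely many cycles $D_1, D_2, \ldots$ of length at most~$i$ are each non-nested with~$C$.

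First, every such $D_k$ must share at least two vertices with~$C$: failing to be nested with~$C$ forces $D_k$ to have vertices or edges in both faces of~$\varphi(C)$; since $\varphi(C)$ is a Jordan curve and edges of~$G$ do not cross in the embedding, any passage of $D_k$ between $f_C^0$ and~$f_C^1$ must occur at a vertex of~$C$, and since $D_k$ is a cycle it must cross back, yielding at least two such vertices. As $V(C)$ is finite, pigeonhole produces a pair $u, v \in V(C)$ that lies in $V(D_k)$ for infinitely many~$k$. Each such $D_k$ is then the union of two internally disjoint $u$-$v$ paths of length at most~$i$, so it will suffice to show that $G$ contains only finitely many $u$-$v$ paths of length at most~$i$.

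The remaining work is therefore the auxiliary claim: \emph{in a finitely separable graph, for any vertices $x, y$ and any $j \in \nat$, there are only finitely many $x$-$y$ paths of length at most~$j$.} I would prove this by induction on~$j$. For $j \leq 1$ it reduces to the fact that finite separability forbids infinitely many parallel edges between $x$ and~$y$, since any such family would constitute infinitely many edge-disjoint $x$-$y$ paths. For the inductive step, choose a finite edge set $F$ separating $x$ from~$y$, which exists by finite separability. Each $x$-$y$ path of length at most~$j$ uses at least one edge of~$F$, and any such edge $ab$ decomposes the path into a prefix from~$x$ to~$a$ of length at most $j-1$, the edge~$ab$, and a suffix from~$b$ to~$y$ of length at most $j-1$. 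The induction hypothesis bounds the numbers of prefixes and suffixes, and summing over the finitely many oriented edges of~$F$ yields the required finite bound. Feeding this back, the infinitely many $D_k$ through $\{u,v\}$ would yield infinitely many unordered pairs of short $u$-$v$ paths, contradicting the claim.

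The main obstacle is exactly this auxiliary claim: since vertex degrees in~$G$ may be infinite, a naive bounded-depth search is unavailable, and one has to convert finite separability from an edge-cut statement into a finiteness statement about short paths, for which the finite separator serves as the crucial bottleneck.
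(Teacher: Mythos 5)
Your proof is correct, and its second half runs along a genuinely different track than the paper's. Both arguments begin the same way: a non-nested cycle must meet $C$ in at least two vertices, so by pigeonhole some pair $u,v\in V(C)$ lies on infinitely many of the offending cycles, and everything reduces to showing that a finitely separable graph has only finitely many short $u$--$v$ paths. For that reduction the paper uses a descent: among infinitely many distinct $u$--$v$ paths of bounded length, either infinitely many are edge-disjoint (contradicting finite separability directly, in its ``no infinite edge-disjoint family'' form) or infinitely many share a further vertex $x_3$, which yields infinitely many strictly shorter paths between two of the vertices $u,v,x_3$; iterating terminates because lengths drop each time. You instead isolate a standalone lemma~-- for all $x,y$ and all $j$ there are only finitely many $x$--$y$ paths of length at most $j$~-- and prove it by induction on $j$ using the other formulation of finite separability, a finite edge cut $F$ between $x$ and $y$, splitting each path at its first $F$-edge into two shorter paths covered by the induction hypothesis. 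Your route makes explicit a dichotomy that the paper leaves implicit (that failure of a common third vertex really does produce infinitely many edge-disjoint paths needs a short greedy argument), and it yields an explicit, reusable finiteness statement with a bound in terms of the cuts; the paper's descent is shorter and never needs to invoke the separator formulation. One small remark: your claim that non-nestedness forces $D_k$ itself to have vertices or edges in both faces of $\varphi(C)$ is fine~-- if $D_k$ lay in one closed face of $C$, then $C$ could not meet both faces of $D_k$ either~-- but strictly speaking non-nestedness as defined could a priori come from the other direction, and in that case the symmetric argument still gives the two shared vertices, which is all you use; this is at the same level of rigour as the paper, which simply asserts the existence of the two shared vertices.
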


\begin{proof}
Let us assume that some cycle~$C$ is not nested with infinitely many cycles of length at most~$i$.
Then there are two vertices $x_1,x_2$ of~$C$ that lie on infinitely many of these cycles and thus we obtain infinitely many distinct $x_1$--$x_2$ paths of length at most $i-1$.
Either there are already infinitely many edge disjoint $x_1$--$x_2$ paths or infinitely many share another vertex~$x_3$. In the latter situation, there are either infinitely many distinct $x_1$--$x_3$ or $x_2$--$x_3$ paths of length at most $i-2$.
Continuing this process, we end up at some point with two distinct vertices and infinitely many edge disjoint paths between them, since we reduce the length of the involved paths in each step by at least~$1$.
So we obtain a contradiction to finite separability.
\end{proof}

Let $\EF$ be a set of indecomposable closed walks of length at most~$i$ in a finitely separable graph~$G$ and $C\sub G$ be an indecomposable closed walk.
We define $\mu_\EF(C)$ to be the number of elements of~$\EF$ that are not nested with~$C$.
Note that Proposition~\ref{prop_MuFinite} says that $\mu_\EF(C)$ is finite.
If $\FF$ is another set of indecomposable closed walks of length at most~$i$, we set $\mu_\EF(\FF)$ as minimum over all $\mu_\EF(C)$ with $C\in \FF$.

\begin{prop}\label{prop_muDecreases}
Let $G$ be a finitely separable planar graph.
Let $\EF$ be a set of indecomposable closed walks in~$G$ of length at most $i\in\nat$ and let $C,D$ be two indecomposable closed walks in~$G$ that are not nested.
Then we have
\[
\mu_\EF(C)+\mu_\EF(D)\geq\mu_\EF(\widetilde{C})+\mu_\EF(\widetilde{D}),
\]
where $\widetilde{C}$ and $\widetilde{D}$ are the closed walks obtained by Lemma~\ref{lem_nestedCorners}.
Furthermore, if $D\in\EF$, then the inequality is a strict inequality.
\end{prop}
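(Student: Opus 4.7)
The plan is to prove the inequality pointwise in $E\in\EF$: I will show that for every $E$,
\[
\mathbf{1}[E\not\sim C]+\mathbf{1}[E\not\sim D]\;\geq\;\mathbf{1}[E\not\sim\widetilde{C}]+\mathbf{1}[E\not\sim\widetilde{D}],
\]
where $\sim$ denotes nestedness, and then sum over $E$ to obtain the desired inequality. By Remark~\ref{rem_indecCW=C} every closed walk involved is a cycle, so nestedness of two of them means that each lies in one closed face of the other. The case that $E$ is non-nested with both of $C,D$ is trivial. Otherwise, assume $E$ is nested with~$D$, so $E\subseteq\overline{f_D^j}$ for some $j\in\{0,1\}$.

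The key sub-claim I would then prove is that $E$ is nested with whichever of $\widetilde{C},\widetilde{D}$ bounds a corner $f_C^a\cap f_D^{1-j}$ on the side of~$D$ opposite to~$E$. By Lemma~\ref{lem_nestedCorners}, such a walk $\widetilde{X}$ is a cyclic concatenation of arcs of~$C$ (lying in $\overline{f_D^{1-j}}$, hence disjoint from the open face $f_D^j$ where $E$ has its non-$D$ edges) and arcs of~$D$ (lying on~$D$ itself, hence not crossed by~$E$). Consequently $E$ lies in a single closed face of~$\widetilde{X}$, so $E$ is nested with~$\widetilde{X}$. Applying this argument symmetrically when $E$ is also nested with~$C$ yields nestedness with both $\widetilde{C}$ and~$\widetilde{D}$; in every case the contribution of~$E$ on the right does not exceed its contribution on the left.

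For the strict inequality when $D\in\EF$, I would verify that $E=D$ contributes exactly~$1$ to the left side (since $C\not\sim D$ by hypothesis) and $0$ to the right side, i.e.\ that $D$ is nested with each of $\widetilde{C},\widetilde{D}$. Each $\widetilde{X}$ is a cyclic concatenation of $D$-arcs (which lie on~$D$) and $C$-arcs (each with both endpoints on~$D$ and lying in the single closed face of~$D$ containing the corner bounded by~$\widetilde{X}$). Hence no vertex or edge of~$\widetilde{X}$ lies in the opposite open face of~$D$; conversely every edge of~$D$ outside the $D$-arcs of~$\widetilde{X}$ lies in one fixed closed face of~$\widetilde{X}$. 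Both conditions together give $D$ nested with~$\widetilde{X}$.

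The main obstacle will be to translate carefully between the spin-based crossing definition of Figure~\ref{fig_cross} and the "lies in one closed face" formulation used above, since the walks may share vertices and edges. I would handle this by recording that every switch between a $C$-arc and a $D$-arc along~$\widetilde{X}$ happens at a vertex of $C\cap D$ and is forced by the spins inherited from~$C$ and~$D$, so crossings between~$\widetilde{X}$ and any third walk can be read off directly from the face structure of $C\cup D$ produced in the proof of Lemma~\ref{lem_nestedCorners}.
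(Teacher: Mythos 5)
Your overall strategy coincides with the paper's: you compare the two sides of the inequality elementwise over $E\in\EF$, and for strictness you observe that $D$ itself is counted once on the left and, avoiding both open corner regions, not at all on the right; the individual cases are also settled, as in the paper, by exhibiting an open face of the relevant corner boundary that $E$ avoids. There is, however, a genuine gap in the case that $E$ is nested with \emph{both} $C$ and $D$. Your key sub-claim only ever gives nestedness of $E$ with the corner boundary lying on the side of $D$ (respectively of $C$) \emph{opposite} to $E$. Now suppose $E\subseteq\overline{f_C^k}\cap\overline{f_D^j}$ and the corner $f_C^k\cap f_D^j$ is itself one of the two corners provided by Lemma~\ref{lem_nestedCorners} (after the paper's relabelling this is exactly the sub-case $E\subseteq\overline{f_C^1}\cap\overline{f_D^1}$ with corners $f_C^0\cap f_D^0$ and $f_C^1\cap f_D^1$). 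Then both your application with respect to $D$ and the ``symmetric'' application with respect to $C$ single out the \emph{same} walk, namely the boundary of the far corner $f_C^{1-k}\cap f_D^{1-j}$, and nestedness of $E$ with the boundary of the near corner $f_C^k\cap f_D^j$ is never established. In precisely this sub-case your pointwise inequality is unproven, and the sub-case is not vacuous (any $E\in\EF$ contained in the closure of one of the two corner regions falls into it), so the proposition does not follow as written.

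This missing step cannot be recovered by symmetry from your sub-claim; it needs a different argument, and it is in fact the delicate point of the proof. The paper supplies it by showing a containment rather than an avoidance: from $E$ avoiding $f_C^{1-k}$ and $f_D^{1-j}$ it concludes that $E$ ``lies in $f_C^k\cap f_D^j$ with its boundary'', i.e.\ $E\subseteq\overline{f_C^k\cap f_D^j}$, whence $E$ avoids the other open face of the near-corner boundary. The reason this requires care, and does not follow formally from $E\subseteq\overline{f_C^k}\cap\overline{f_D^j}$, is that this last set may contain vertices and edges of $C\cap D$ that do \emph{not} lie in $\overline{f_C^k\cap f_D^j}$ (shared pieces of $C$ and $D$ whose two sides are the crescents $f_C^k\cap f_D^{1-j}$ and $f_C^{1-k}\cap f_D^j$); one must rule out that $E$ uses such a piece while also meeting the open corner $f_C^k\cap f_D^j$. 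So to complete your argument, replace ``applying this argument symmetrically \dots yields nestedness with both'' by an argument for the near corner along these lines. Your closing worry about reconciling the spin-based crossing definition with the face formulation is, by contrast, already taken care of: all walks involved are cycles by Remark~\ref{rem_indecCW=C}, and the footnote in Section~3 records that for cycles nestedness is equivalent to neither having vertices or edges in both faces of the other, which is all that your (and the paper's) face arguments use.
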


\begin{proof}
Using homeomorphisms of the sphere, we may assume that $\widetilde{C}$ is the boundary of $f_C^0\cap f_D^0$ and $\widetilde{D}$ is the boundary of $f_C^1\cap f_D^1$.
Let $F\in\EF$ be nested with $C$ and~$D$.
We may assume that $F$ avoids $f_C^0$.
Thus, it is nested with $\widetilde{C}$.
If $F$ avoids $f_D^0$, too, then it lies in $f_C^1\cap f_D^1$ with its boundary and is nested with $\widetilde{D}$.
So let us assume that it avoids~$f_D^1$.
Thus, $F$ does not contain any points of $f_C^1\cap f_D^1$ and hence is nested with~$\widetilde{D}$.

Now consider the case that $F\in\EF$ is nested with~$C$ but not with~$D$.
We may assume that $F$ avoids~$f_C^0$.
Hence, it avoids $f_C^0\cap f_D^0$, too, and is nested with~$\widetilde{C}$.

This shows that every $F\in\EF$ that is not counted on the left side of the inequality is not counted on the right side either and that every $F\in\EF$ that is counted on the left side precisely once is counted on the right side at most once, which implies the first part of the assertion.

To see the additional statement, just note that $D$ is counted on the left for $\mu_\EF(C)$ but not for~$\mu_\EF(D)$ and that both closed walks $\widetilde{C}$ and $\widetilde{D}$ are nested with~$D$.
\end{proof}

\section{Finding a nested generating set}

The main theorem of~\cite{H-GeneratingCycleSpace} says that the cycle space of any $3$-connected finitely separable planar graph~$G$ is generated by some canonical nested set of cycles as $\mathbb F_2$-vector space.
We shall prove the analogous result for the set $\WF(G)$ of all closed walks.

Throughout this section, let $G$ be a $3$-connected planar finitely separable graph.
Let $\WF_i:=\WF_i(G)$ be the subset of~$\WF(G)$ generated by all closed walks of length at most~$i$.
So $\WF(G)=\bigcup_{i\in\nat}\WF_i$.
We shall recursively define canonical nested subsets $\CF_i$ of~$\WF_i$ that generate~$\WF_i$ and consist only of indecomposable closed walks of length at most~$i$.
So $\bigcup_{i\in\nat}\CF_i$ will generate~$\WF(G)$.
We shall define the $\CF_i$ recursively.
For the start, let $\CF_i=\es$ for $i\leq 2$.
Now let us assume that we already defined $\CF_{i-1}$.

In order to define $\CF_i$, we construct another sequence of nested $\Aut(G)$-invariant sets $\CF_i^\kappa$ of indecomposable closed walks.
Set $\CF_i^0:=\CF_{i-1}$.
Let $\kappa$ be some ordinal such that $\CF_i^\lambda$ is defined for all $\lambda<\kappa$.
If $\kappa$ is a limit ordinal, then set $\CF_i^\kappa=\bigcup_{\lambda<\kappa}\CF_i^\lambda$.
So let $\kappa$ be a successor ordinal, say $\kappa=\nu+1$.
Any closed walk of length~$i$ that is not generated by~$\CF_i^\nu$ must be indecomposable by definition of~$\CF_{i-1}$.
If there is not such a closed walk, set $\CF_i:=\CF_i^\nu$.
So in the following, we assume that there is at least one indecomposable closed walk of length~$i$ that is not generated by $\CF_i^\nu$.
Hence, the set $\DF_i^\kappa$ of all indecomposable closed walks of length~$i$ that are not generated by~$\CF_i^\nu$ is not empty.

\begin{lem}\label{lem_DHasANiceCycle}
The set $\DF_i^\kappa\neq\es$ contains a closed walk that is nested with~$\CF_i^\nu$.
\end{lem}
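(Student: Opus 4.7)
The plan is a minimality argument using the function~$\mu_{\CF_i^\nu}$. By Proposition~\ref{prop_MuFinite} it takes values in~$\nat$, so I choose $C\in\DF_i^\kappa$ that minimises $\mu_{\CF_i^\nu}(C)$. If this minimum equals~$0$, then $C$ is nested with every walk in~$\CF_i^\nu$ and we are done, so I assume henceforth that it is positive and aim for a contradiction with the minimal choice of~$C$.

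Since $\mu_{\CF_i^\nu}(C)>0$, some $D\in\CF_i^\nu$ fails to be nested with~$C$. I apply Lemma~\ref{lem_nestedCorners} to $C$ and~$D$ to obtain nested indecomposable closed walks $\widetilde{C},\widetilde{D}$ with $|\widetilde{C}|=|C|=i$ and $|\widetilde{D}|=|D|\le i$, together with the additional generation property. Because $\CF_i^\nu$ is nested and contains~$D$, we have $\mu_{\CF_i^\nu}(D)=0$, so the strict form of Proposition~\ref{prop_muDecreases} (valid since $D\in\CF_i^\nu$) gives
\[
\mu_{\CF_i^\nu}(\widetilde{C})+\mu_{\CF_i^\nu}(\widetilde{D})<\mu_{\CF_i^\nu}(C).
\]
In particular, each of $\mu_{\CF_i^\nu}(\widetilde{C})$ and $\mu_{\CF_i^\nu}(\widetilde{D})$ is strictly smaller than~$\mu_{\CF_i^\nu}(C)$. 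Now $\widetilde{C}$ is indecomposable of length exactly~$i$, so either $\widetilde{C}\in\DF_i^\kappa$~-- contradicting the minimality of~$C$~-- or $\widetilde{C}$ is generated by~$\CF_i^\nu$; hence $\widetilde{C}$ is generated. The same dichotomy handles $\widetilde{D}$ when $|\widetilde{D}|=i$, while if $|\widetilde{D}|<i$ then $\widetilde{D}\in\WF_{i-1}$ is automatically generated by $\CF_{i-1}\sub\CF_i^\nu$. Thus both $\widetilde{C}$ and $\widetilde{D}$ are generated by~$\CF_i^\nu$, and the additional clause of Lemma~\ref{lem_nestedCorners}~-- together with the fact that $\CF_i^\nu\supseteq\CF_{i-1}$ generates every closed walk of length less than~$i$~-- will let me conclude that $C$ itself is generated by~$\CF_i^\nu$, contradicting $C\in\DF_i^\kappa$.

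The main subtlety I expect lies in this last step: Lemma~\ref{lem_nestedCorners} as stated asserts only that $C$ \emph{or}~$D$ is generated by~$\CF_i^\nu$, and since $D\in\CF_i^\nu$ is trivially generated, this disjunction is not by itself decisive. I would resolve it by returning to the case analysis inside the proof of Lemma~\ref{lem_nestedCorners}: when both corner walks $\widetilde{C}$ and $\widetilde{D}$ are generated modulo walks of length less than~$i$, the symmetric addition of the small closed walks along the two corner boundaries recovers $C$ and $D$ simultaneously, so that the genuine new information extracted from the lemma in our setting is precisely the generation of~$C$.
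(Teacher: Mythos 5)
Your argument is correct and follows essentially the same route as the paper: pick $C\in\DF_i^\kappa$ minimising $\mu_{\CF_i^\nu}$, apply Lemma~\ref{lem_nestedCorners} to a non-nested $D\in\CF_i^\nu$, and use the strict inequality of Proposition~\ref{prop_muDecreases} (with $\mu_{\CF_i^\nu}(D)=0$) to force a contradiction either via a non-generated corner walk of length~$i$ with smaller $\mu$ or via $C$ itself being generated. The subtlety you flag at the end is resolved in the paper exactly as you suggest: it asserts, on the strength of the construction in Lemma~\ref{lem_nestedCorners}, that $C$ is generated by $D$, $\widetilde{C}$ and $\widetilde{D}$, so that generation of both corner walks by $\CF_i^\nu$ yields generation of~$C$ and hence the desired contradiction with $C\in\DF_i^\kappa$.
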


\begin{proof}
Let $C\in \DF_i^\kappa$ with minimum $\mu_{\CF_i^\nu}(C)$.
(As all involved closed walks are indecomposable, $\mu_{\CF_i^\nu}(C)$ is well-defined.)
We shall show $\mu_{\CF_i^\nu}(C)=0$.
So let us suppose that $C$ is not nested with some $D\in\CF_i^\nu$.
Since $C$ and~$D$ are indecomposable, we obtain by Lemma~\ref{lem_nestedCorners} two indecomposable closed walks $\widetilde{C}$ and $\widetilde{D}$ with $|C|=|\widetilde{C}|$ and $|D|=|\widetilde{D}|$ such that Proposition~\ref{prop_muDecreases} implies
\[
\mu_{\CF_i^\nu}(C)=\mu_{\CF_i^\nu}(C)+\mu_{\CF_i^\nu}(D)>\mu_{\CF_i^\nu}(\widetilde{C})+\mu_{\CF_i^\nu}(\widetilde{D}).
\]
Note that, if $\widetilde{C}$ and $\widetilde{D}$ are generated by~$\CF_i^\nu$, then $C$ being generated by~$D$, $\widetilde{C}$, and $\widetilde{D}$ is generated by~$\CF_i^\nu$, too.
But then it does not lie in~$\DF_i^\kappa$.
As it does, either $\widetilde{C}$ or~$\widetilde{D}$ is not generated by~$\CF_i^\nu$.
In particular, this closed walk must lie in $\DF_i^\kappa$, a contradiction to the choice of~$C$.
\end{proof}

Let $\EF_i^\kappa$ be the set of all closed walks in~$\DF_i^\kappa$ that are nested with~$\CF_i^\nu$.
By Lemma~\ref{lem_DHasANiceCycle}, this set is not empty.

For a set $\EF$ of closed walks of length at most~$i$, we call $C\in\EF$ \emph{optimally nested} in~$\EF$ if $\mu_\EF(C)=\mu_\EF(\EF)$.
Note that $\mu_\EF(\EF)$ is finite by Proposition~\ref{prop_MuFinite} and, furthermore, as $3$-connected planar graphs have (up to homeomorphisms) unique embeddings into the sphere due to Whitney~\cite{whitney_congruent_1932} for finite graphs and Imrich~\cite{I-Whitney} for infinite graphs, $\mu_\EF(C)=\mu_\EF(C\alpha)$ for all $\alpha\in\Aut(G)$.

\begin{lem}\label{lem_OptNestedAreAutInvariant}
The set $\FF_i^\kappa$ of optimally nested closed walks in $\EF_i^\kappa$ is non-empty and nested.
\end{lem}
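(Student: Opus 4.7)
The plan is to split the statement into non-emptiness and nestedness. Non-emptiness is nearly immediate: by Proposition \ref{prop_MuFinite} every value $\mu_{\EF_i^\kappa}(C)$ is a non-negative integer, and $\EF_i^\kappa$ is non-empty by Lemma \ref{lem_DHasANiceCycle}, so the minimum $m := \mu_{\EF_i^\kappa}(\EF_i^\kappa)$ is realised by at least one element of $\EF_i^\kappa$, which by definition belongs to $\FF_i^\kappa$.

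For nestedness I would argue by contradiction. Assume $C, D \in \FF_i^\kappa$ cross, so $\mu_{\EF_i^\kappa}(C) = \mu_{\EF_i^\kappa}(D) = m$. Apply Lemma \ref{lem_nestedCorners} to produce nested indecomposable closed walks $\widetilde{C}, \widetilde{D}$ of lengths $|C| = |D| = i$ that moreover satisfy the generation-tracking addendum of that lemma. The target is to verify that both $\widetilde{C}, \widetilde{D}$ still lie in $\EF_i^\kappa$: once this is known, a second invocation of Proposition \ref{prop_muDecreases} with $\EF = \EF_i^\kappa$ — strict because $D \in \EF_i^\kappa$ — yields $2m > \mu_{\EF_i^\kappa}(\widetilde{C}) + \mu_{\EF_i^\kappa}(\widetilde{D})$, so at least one of $\widetilde{C}, \widetilde{D}$ undercuts the minimum $m$, contradicting the choice of $m$.

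Placing $\widetilde{C}, \widetilde{D}$ in $\EF_i^\kappa$ breaks into two checks. Nestedness with $\CF_i^\nu$ comes from a first (non-strict) application of Proposition \ref{prop_muDecreases} with $\EF = \CF_i^\nu$: as $C, D \in \EF_i^\kappa$ are each nested with $\CF_i^\nu$, the left-hand side of that inequality vanishes, forcing $\mu_{\CF_i^\nu}(\widetilde{C}) = \mu_{\CF_i^\nu}(\widetilde{D}) = 0$. Non-generation by $\CF_i^\nu$ uses the addendum of Lemma \ref{lem_nestedCorners}: if $\CF_i^\nu$ generated $\widetilde{C}$ or $\widetilde{D}$, it would already generate $C$ or $D$, contradicting $C, D \in \DF_i^\kappa$; here the induction hypothesis that $\CF_{i-1} \subseteq \CF_i^\nu$ generates all closed walks of length less than $i$ ensures the addendum applies. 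Indecomposability and length $i$ are delivered directly by Lemma \ref{lem_nestedCorners}. The main obstacle is precisely this step of keeping $\widetilde{C}, \widetilde{D}$ inside $\DF_i^\kappa$: without the generation-preserving clause in Lemma \ref{lem_nestedCorners}, the replacements might be absorbed by $\CF_i^\nu$ and the strict descent granted by Proposition \ref{prop_muDecreases} would be wasted, so the proof hinges on coupling those two results in exactly the form in which they were set up.
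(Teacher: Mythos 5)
Your proposal is correct and follows essentially the same route as the paper: non-emptiness from finiteness of the $\mu$-values, then, for two crossing optimally nested walks, two applications of Proposition~\ref{prop_muDecreases} (first with $\CF_i^\nu$ to keep $\widetilde{C},\widetilde{D}$ nested with $\CF_i^\nu$, then strictly with $\EF_i^\kappa$) combined with the generation-tracking addendum of Lemma~\ref{lem_nestedCorners} to keep $\widetilde{C},\widetilde{D}$ in $\DF_i^\kappa$, contradicting minimality. In fact you spell out the membership of $\widetilde{C},\widetilde{D}$ in $\EF_i^\kappa$ in slightly more detail than the paper does.
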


\begin{proof}
Since $\EF_i^\kappa$ is non-empty, the same is true for~$\FF_i^\kappa$.
Let us suppose that $\FF_i^\kappa$ contains two closed walks $C,D$ that are not nested.
Let $\widetilde{C}$ and $\widetilde{D}$ be the indecomposable closed walks obtained by Lemma~\ref{lem_nestedCorners} with $|C|=|\widetilde{C}|$ and $|D|=|\widetilde{D}|$ each of which is not generated by~$\CF_i^\nu$ and such that Proposition~\ref{prop_muDecreases} yields
\[
\mu_{\CF_i^\nu}(C)+\mu_{\CF_i^\nu}(D)\geq\mu_{\CF_i^\nu}(\widetilde{C})+\mu_{\CF_i^\nu}(\widetilde{D}).
\]
As $\EF_i^\kappa$ is nested with~$\CF_i^\nu$ by definition, we have $\mu_{\CF_i^\nu}(C)+\mu_{\CF_i^\nu}(D)=0$.
Note that $\widetilde{C}$ and~$\widetilde{D}$ lie in~$\DF_i^\kappa$ by definition.
As both are nested with $\CF_i^\nu$, they lie in~$\EF_i^\kappa$.
We apply Proposition~\ref{prop_muDecreases} once more and obtain
\[
\mu_{\EF_i^\kappa}(C)+\mu_{\EF_i^\kappa}(D)>\mu_{\EF_i^\kappa}(\widetilde{C})+\mu_{\EF_i^\kappa}(\widetilde{D}).
\]
Thus either $\widetilde{C}$ or $\widetilde{D}$ is not nested with less elements of~$\EF_i^\kappa$ than~$C$.
This contradiction to the choice of~$C$ shows that $\EF_i^\kappa$ is nested.
\end{proof}

So we set $\CF_i^\kappa:=\CF_i^\nu\cup\FF_i^\kappa$.
Then $\CF_i^\kappa$ is nested as $\CF_i^\nu$ is nested and by the choice of~$\EF_i^\kappa$ all elements of $\CF_i^\kappa$ are indecomposable.

This process will terminate at some point as we strictly enlarge the sets $\CF_\kappa$ in each step but we cannot put in more closed walks than there are in~$G$.
Let $\CF_i$ be the union of all $\CF_i^\kappa$.
Note that we made no choices at any point, i.\,e.\ all sets $\CF_i$ are $\Aut(G)$-invariant and canonical.
Thus, we proved Theorem~\ref{thm_canGen}.
More precisely, we have proved the following theorem.

\begin{thm}\label{thm_mainCWNested}
For every finitely separable $3$-connected planar graph~$G$ there is a sequence $(\CF_i)_{i\in\nat}$ of sets of closed walks in~$G$ such that
\begin{enumerate}[\rm (i)]
\item $\CF_{i-1}\sub\CF_i$;
\item $\CF_i\sm\CF_{i-1}$ consists of indecomposable closed walks of length~$i$;
\item $\CF_i$ generates $\WF_i(G)$;
\item $\CF_i$ is canonical and nested.
\end{enumerate}

In particular, $\WF(G)$ has a canonical nested generating set.\qed
\end{thm}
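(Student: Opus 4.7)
The construction of the sets $\CF_i$ has already been carried out in the preceding paragraphs; what remains is to assemble the verifications. My plan is a clean outer induction on $i$ with an inner transfinite induction on~$\kappa$, checking at each step that the required invariants are preserved.

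First, for fixed $i$, I would show by transfinite induction on~$\kappa$ that every $\CF_i^\kappa$ is an $\Aut(G)$-invariant nested set of indecomposable closed walks of length at most~$i$. The base $\CF_i^0=\CF_{i-1}$ inherits this from the outer induction hypothesis. At a successor stage $\kappa=\nu+1$, Lemma~\ref{lem_OptNestedAreAutInvariant} supplies a non-empty nested set $\FF_i^\kappa\sub\EF_i^\kappa$ of indecomposable closed walks of length~$i$; since every element of $\EF_i^\kappa$ is by definition nested with all of~$\CF_i^\nu$, the union $\CF_i^\kappa=\CF_i^\nu\cup\FF_i^\kappa$ is nested. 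The $\Aut(G)$-invariance is preserved because the optimality condition defining $\FF_i^\kappa$ is formulated entirely in terms of the already $\Aut(G)$-invariant datum $\CF_i^\nu$ and of the nestedness relation, which is a graph invariant by the Whitney--Imrich uniqueness of the embedding cited above. Limit stages are unions and hence preserve both properties.

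Second, I would argue that the transfinite process terminates. At each successor step with $\DF_i^\kappa\neq\es$ at least one new closed walk is added to $\CF_i^\kappa$, yet the total number of closed walks of length~$i$ in~$G$ is bounded by the cardinality of $V(G)^i$. Hence there is a least ordinal $\kappa$ at which $\DF_i^\kappa=\es$, and I set $\CF_i:=\CF_i^\nu$ at that stage.

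Third, I would verify items (i)--(iv). Conditions (i), (ii) and~(iv) are immediate from the construction (nestedness and canonicity having been established above). For~(iii) I would argue as follows: by the outer induction, $\CF_{i-1}$ generates $\WF_{i-1}(G)$, so it suffices to generate closed walks of length exactly~$i$. A decomposable closed walk of length~$i$ is by definition generated by strictly shorter closed walks and therefore by~$\CF_{i-1}\sub\CF_i$; an indecomposable closed walk of length~$i$ is generated by $\CF_i$ because $\DF_i^\kappa=\es$ at the terminating stage. The final ``in particular'' clause follows by taking $\bigcup_{i\in\nat}\CF_i$, which is nested as an ascending union of nested sets and which generates $\WF(G)=\bigcup_{i\in\nat}\WF_i(G)$.

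The main obstacle I expect in writing this up cleanly is the bookkeeping for canonicity: one must verify that every ingredient entering the recursion -- the set $\DF_i^\kappa$, the nestedness relation, the functional $\mu_{\CF_i^\nu}$, and hence the output $\FF_i^\kappa$ -- is defined purely in terms of previously constructed $\Aut(G)$-invariant objects, so that no arbitrary choice is ever made. The Whitney--Imrich embedding uniqueness is the external input that makes this possible; once it is invoked, each definition in the construction is automatically equivariant and the recursion produces a canonical family.
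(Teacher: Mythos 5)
Your proposal is correct and follows essentially the paper's own argument: the theorem is proved there precisely by the recursive construction of the sets $\CF_i^\kappa$ via Lemmas~\ref{lem_DHasANiceCycle} and~\ref{lem_OptNestedAreAutInvariant}, with termination by a cardinality bound and canonicity from the choice-free, equivariant definitions resting on the Whitney--Imrich uniqueness of the embedding. Your write-up merely assembles the same verifications (inner transfinite induction, outer induction on~$i$) explicitly, which is exactly what the paper leaves implicit.
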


Note that the only situation where we used $3$-connectivity was when we concluded that we have $\mu_\EF(C)=\mu_\EF(C\alpha)$ for any closed walk~$C$, set~$\EF$ of closed walks of bounded length and automorphism~$\alpha$.
That is, the above proof also give us the existence of a nested generating set for lower connectivity, but we lose canonicity.
Note that, in general, not only our proof fails but the statement of Theorem~\ref{thm_canGen} is false if we do not require the graph to be $3$-connected:
let $G$ be the graph obtained by two vertices joined by four internally disjoint paths of length~$2$.
Then all cycles have length~$4$ and lie in the same $\Aut(G)$-orbit, but it is not hard to find two of them which are not nested.
So you cannot find a canonical nested generating set of~$\WF(G)$ consisting only of indecomposable closed walks.
Similarly, whichever generating set you take, none of its elements is nested with all of its $\Aut(G)$-images.

\section{Finding a finite generating set}\label{sec_planar}

We call a graph \emph{quasi-transitive} if its automorphism group has only finitely many orbits on the vertex set.
If a group $\Gamma$ acts on a graph~$G$, we denote by $|G/\Gamma|$ the number of $\Gamma$-orbits on~$G$.
In particular, if~$G$ is quasi-transitive, then $|G/\Aut(G)|<\infty$.
If $H$ is a subgraph of~$G$, we denote by $\Gamma_H$ the (setwise) stabiliser of~$H$ in~$\Gamma$.

In this section, we give up nestedness of our generating set for $\WF(G)$ in order to obtain a generating set consisting of only finitely many orbits.
More precisely, we shall prove the following theorem.

\begin{thm}\label{thm_mainCW}
Let $G$ be a locally finite quasi-transitive planar graph.
Then $\WF(G)$ has an $\Aut(G)$-invariant generating set that consists of finitely many orbits.
\end{thm}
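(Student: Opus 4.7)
The plan is to bootstrap from the canonical nested generating set supplied by Theorem~\ref{thm_mainCWNested} and convert it into one with finitely many orbits using quasi-transitivity and local finiteness. First I would reduce to the case that $G$ is $3$-connected: a locally finite graph is automatically finitely separable (bounded degree caps the number of edge-disjoint paths between any two vertices), and a canonical Tutte-style decomposition of $G$ along $1$- and $2$-separators produces $3$-connected parts together with virtual edges at the adhesion sets. Because the decomposition is canonical, $\Aut(G)$ acts on it, and the finitely many orbits of parts and adhesion sets that quasi-transitivity forces allow one to lift a finite-orbit generating set for $\WF$ of each part (plus short closed walks around each adhesion set) to one for $\WF(G)$. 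So I may assume $G$ is $3$-connected.

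Having Theorem~\ref{thm_mainCWNested}, I obtain the canonical nested generating set $\CF=\bigcup_i\CF_i$, $\Aut(G)$-invariant by canonicity. The first concrete gain from our assumptions is that, for each fixed $\ell\in\nat$, the closed walks of length $\ell$ in $G$ split into only finitely many $\Aut(G)$-orbits: by local finiteness there are finitely many closed walks of length $\ell$ through any given vertex, and by quasi-transitivity there are finitely many vertex orbits. In particular each $\CF_i$ has only finitely many $\Aut(G)$-orbits, so the whole theorem reduces to showing that in fact $\CF=\CF_N$ for some finite $N$, i.e.\ that the indecomposable closed walks recorded by Theorem~\ref{thm_mainCWNested} have uniformly bounded length.

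The core obstacle is exactly this length bound. I would attack it by contradiction. Suppose $\CF$ contains indecomposable closed walks of arbitrarily large length. Because $G$ is locally finite, a K\"onig-style compactness argument extracts an infinite sequence $(C_j)_{j\in\nat}$ in $\CF$ of strictly increasing length whose bounded faces in a fixed planar embedding form a nested chain (either expanding or shrinking, possibly after dualising), converging to an end/accumulation point of~$G$. Quasi-transitivity then forces infinitely many of the $C_j$ to lie in a common $\Aut(G)$-orbit. Using that $3$-connected planar graphs have essentially unique embeddings (Whitney~\cite{whitney_congruent_1932}, Imrich~\cite{I-Whitney}), an automorphism mapping one such $C_j$ into another inside the chain would produce a strictly shorter closed walk with the same generating power, contradicting either Remark~\ref{rem_indecCW=C} (no shortcuts on indecomposable walks) or the minimality condition built into the construction of~$\CF$ in Section~4. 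Making this step rigorous — in particular controlling nested indecomposable walks simultaneously with the $\Aut(G)$-action near ends with accumulation points in the plane — is the delicate part of the argument; once a uniform length bound $N$ is in hand, picking one representative from each $\Aut(G)$-orbit of $\CF_N$ yields the required finite collection of orbits generating $\WF(G)$.
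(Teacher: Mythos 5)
Your reduction to the $3$-connected case and the observation that, for fixed $\ell$, the closed walks of length $\ell$ fall into finitely many $\Aut(G)$-orbits are both fine and in the spirit of the paper (Propositions~\ref{prop_red1to2} and~\ref{prop_red2to3}). But the heart of your argument has a genuine gap: you reduce the theorem to the claim that the canonical nested set $\CF$ of Theorem~\ref{thm_mainCWNested} consists of walks of uniformly bounded length, i.e.\ $\CF=\CF_N$ for some $N$. The paper never proves this and in fact deliberately avoids it~-- Section~\ref{sec_planar} opens by announcing that nestedness is \emph{given up} in order to obtain a generating set with finitely many orbits, so your intermediate claim is strictly stronger than the theorem and is not known to hold. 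Your sketch for it also breaks down at a concrete point: after extracting a sequence $(C_j)$ of elements of~$\CF$ of strictly increasing length, you invoke quasi-transitivity to put infinitely many $C_j$ in a common $\Aut(G)$-orbit, but automorphisms preserve the length of a closed walk, so no two walks of distinct lengths can ever share an orbit; the subsequent ``strictly shorter closed walk with the same generating power'' contradiction is likewise not substantiated by Remark~\ref{rem_indecCW=C} or by the construction in Section~4. So the delicate step you flag is not merely delicate~-- as written it fails, and no length bound is obtained.

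For comparison, the paper's proof (Theorem~\ref{thm_PlanarFiniteModule}) does not bound lengths at all. After the reduction to the $3$-connected case, the base case is the VAP-free situation, where the finite face boundaries generate (Proposition~\ref{prop_VAPFreeAccess}). If $G$ is not VAP-free, one picks a single closed walk $C\in\CF$ both of whose faces contain infinitely many vertices; the set $\{C\alpha\mid\alpha\in\Gamma\}$ is nested, and one cuts $G$ into the maximal subgraphs $H$ not disconnected by any $C\alpha$. By Lemma~\ref{lem_SmallerDegSequ} each $(H,\Gamma_H)$ has a strictly smaller degree sequence of orbits, so induction on that well-ordered invariant (Lemma~\ref{lem_IndOnDegSequ}) supplies finite-orbit generating sets $\EF_H$, of which there are finitely many orbits. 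Finally, an arbitrary indecomposable closed walk $D$ crossing some $C\alpha$ is decomposed, by inserting the part of $C\alpha$ inside~$D$, into walks each crossing strictly fewer images $C\beta$ (this number is finite by Proposition~\ref{prop_MuFinite}), and a second induction on that crossing number shows the union of the $\EF_H$ generates $\WF(G)$. If you want to salvage your outline, you should replace the unproved length-bound claim by this kind of cutting-and-double-induction argument.
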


Let us introduce the notion of a degree sequence of orbits because the general idea to prove Theorem~\ref{thm_main} will mainly be done by induction on this notion.

Let $\Gamma$ act on a locally finite graph $G$ with $|V(G)|>1$ such that $|G/\Gamma|$ is finite.
We call a tupel $(d_1,\ldots, d_m)$ of positive integers with $d_i\geq d_{i+1}$ for all $i<m$ the \emph{degree sequence of the orbits of~$(G,\Gamma)$} if for some set $\{v_1,\ldots, v_m\}$ of vertices that contains precisely one vertex from each $\Gamma$-orbit the degree of~$v_i$ is~$d_i$.
We consider the lexicographic order on the finite tupels of positive integers (and thus on the degree sequences of orbits), that is, we set
\[
(d_1,\ldots, d_m)\leq (c_1,\ldots, c_n)
\]
if either $m\leq n$ and $d_i=c_i$ for all $i\leq m$ or $d_i<c_i$ for the smallest $i\leq m$ with $d_i\neq c_i$.
Note that any two finite tupels of positive integers are $\leq$-comparable.

A direct consequence of this definition is the following lemma.

\begin{lem}\label{lem_IndOnDegSequPre}
Any strictly decreasing sequence in the set of finite tupels of positive integers is finite.\qed
\end{lem}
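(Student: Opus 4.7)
My plan is to prove the lemma by strong induction on the largest entry $D$ appearing in the tuples, exploiting the fact that the degree sequences under consideration are non-increasing. (This restriction is essential: for arbitrary finite tuples the claim fails, since $(1,2)>(1,1,2)>(1,1,1,2)>\cdots$ is an infinite strictly decreasing chain under the order defined above.)

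For the base $D=1$ every tuple has the form $(1,\ldots,1)$, and $t\leq t'$ reduces to $|t|\leq|t'|$, so a strictly decreasing chain must strictly shorten at each step and is therefore finite. For the inductive step, assume the claim for $D-1$ and suppose for contradiction that $(t_k)$ is an infinite strictly decreasing chain whose entries are all $\leq D$. If infinitely many $t_k$ had first entry $<D$, then by non-increasingness they would all lie in the set of tuples with entries $\leq D-1$, forming an infinite descending subchain and contradicting the inductive hypothesis; so after discarding a finite prefix I may assume every $t_k$ starts with $D$. Let $n_k$ be the number of leading $D$s in $t_k$. A short case analysis against the two cases in the definition of $\leq$ (prefix case, first-difference case) shows $n_{k+1}\leq n_k$ at every step, so $(n_k)$ is a non-increasing sequence of positive integers, eventually constant at some $n$. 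Writing $t_k=(\underbrace{D,\ldots,D}_{n},u_k)$ for large $k$, the tail $u_k$ is non-empty (otherwise $t_{k+1}$, which still starts with $n$ copies of $D$, would have to either equal $t_k$ or properly extend it, both violating strict descent); its entries are all $\leq D-1$ by the definition of $n$; and the descent $t_k>t_{k+1}$ translates at once into $u_k>u_{k+1}$. So $(u_k)$ is an infinite strictly decreasing chain of tuples with entries $\leq D-1$, contradicting the inductive hypothesis.

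The only real work is the short case check showing that the leading-$D$ count is non-increasing and that strict descent of the $t_k$ transfers to the $u_k$; both follow directly from the definition of $\leq$. I expect no genuine obstacle here, and the argument is essentially equivalent to embedding the non-increasing tuples order-preservingly into the ordinal $\omega^{\omega}$ via $(d_1,\ldots,d_m)\mapsto\omega^{d_1-1}+\cdots+\omega^{d_m-1}$ (already in Cantor normal form, since the exponents are non-increasing) and then invoking well-foundedness of the ordinals.
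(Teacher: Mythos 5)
Your proof is correct, and it is in fact more than the paper offers: Lemma~\ref{lem_IndOnDegSequPre} is stated there with no argument at all, as ``a direct consequence of this definition''. Your induction on the maximal entry $D$ is sound~-- the reduction to tuples with first entry $D$, the observation that the number of leading $D$'s cannot increase along a strictly decreasing chain, and the transfer of strict descent to the tails $u_k$ (which are again non-increasing with entries at most $D-1$) all check out against the order as defined in the paper, and the alternative you sketch, embedding the non-increasing tuples order-preservingly into $\omega^{\omega}$ via $(d_1,\ldots,d_m)\mapsto\omega^{d_1-1}+\cdots+\omega^{d_m-1}$, is equally valid. Your preliminary remark deserves emphasis rather than criticism: as literally stated, for \emph{arbitrary} finite tuples of positive integers the lemma is false under the paper's order, since $(1,2)>(1,1,2)>(1,1,1,2)>\cdots$ is an infinite strictly decreasing chain (at the first position of difference the longer tuple has the smaller entry, and the prefix clause does not apply). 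The statement is true~-- and is only ever applied~-- for the non-increasing tuples arising as degree sequences of orbits, where $d_i\geq d_{i+1}$ by definition; your restriction to these is thus a necessary sharpening of the statement, and your argument uses that monotonicity exactly where it is needed (a first entry below $D$ forces all entries below $D$, and the tails $u_k$ have all entries at most $D-1$). In short: no gap, a different and fully worked-out route compared with the paper's omitted proof, plus a correct observation about the precise hypotheses under which the lemma holds.
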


Lemma~\ref{lem_IndOnDegSequPre} for degree sequences of orbits reads as follows and enables us to use induction on the degree sequence of the orbits of graphs:

\begin{lem}\label{lem_IndOnDegSequ}
Let $(G_i,\Gamma_i)$ be a sequence of pairs, where $G_i$ is a locally finite graph and $\Gamma_i$ acts on~$G_i$ such that $|G_i/\Gamma_i|$ is finite.
If the corresponding sequence of degree sequences of the orbits is strictly decreasing, then the sequence $(G_i,\Gamma_i)$ is finite.\qed
\end{lem}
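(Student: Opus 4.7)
The plan is to reduce the statement directly to Lemma~\ref{lem_IndOnDegSequPre}. First I would observe that, for each index~$i$, the assumption $|G_i/\Gamma_i|<\infty$ guarantees that the degree sequence of the orbits of~$(G_i,\Gamma_i)$ has only finitely many entries, while local finiteness of~$G_i$ ensures that each entry is a finite positive integer. Hence the assignment $(G_i,\Gamma_i)\mapsto(d_1^{(i)},\ldots,d_{m_i}^{(i)})$ takes values in the set of finite tuples of positive integers, endowed with the lexicographic order defined just before Lemma~\ref{lem_IndOnDegSequPre}.

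By hypothesis the resulting sequence of tuples is strictly decreasing with respect to that order, so Lemma~\ref{lem_IndOnDegSequPre} applies and yields that this sequence of tuples is finite. Since the original sequence $(G_i,\Gamma_i)$ has exactly as many terms as its sequence of degree sequences of orbits, we conclude that $(G_i,\Gamma_i)$ itself is finite.

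There is no real obstacle here: the content of this lemma is entirely contained in Lemma~\ref{lem_IndOnDegSequPre}. The only mild subtlety is to confirm that the image really lies in \emph{positive} integer tuples, i.\,e.\ that vertices have nonzero degree; this is harmless under the conventions of the paper (where isolated vertices can be excluded from consideration) or, alternatively, the lexicographic order argument of Lemma~\ref{lem_IndOnDegSequPre} works just as well for tuples of nonnegative integers. The present reformulation is then purely a convenient packaging of Lemma~\ref{lem_IndOnDegSequPre} that makes the later inductive arguments over pairs $(G_i,\Gamma_i)$ immediately applicable.
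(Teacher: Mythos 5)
Your proposal is correct and is essentially the paper's own (implicit) argument: the paper states this lemma with no proof precisely because it is the immediate translation of Lemma~\ref{lem_IndOnDegSequPre} to degree sequences of orbits, which is exactly the reduction you carry out. Your side remarks (finitely many entries from $|G_i/\Gamma_i|<\infty$, finite entries from local finiteness, positivity of degrees) match the conventions already built into the definition of the degree sequence of orbits, so nothing further is needed.
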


\begin{lem}\label{lem_SmallerDegSequ}
Let $G$ be a locally finite graph and let $Gamma$ act on~$G$ such that $|G/\Gamma|$ is finite.
Let $S\sub V(G)$ and $H\sub G$ be such that the following conditions hold:
\begin{enumerate}[\rm (i)]
\item\label{itm_SmallerDegSequ_1} $G-S$ is disconnected;
\item\label{itm_SmallerDegSequ_2} each $S\alpha$ with $\alpha\in\Gamma$ meets at most one component of $G-S$;
\item\label{itm_SmallerDegSequ_3} such that no vertex of~$S$ has all its neighbours in~$S$;
\item\label{itm_SmallerDegSequ_4} $H$ is a maximal subgraph of~$G$ such that no $S\alpha$ with $\alpha\in\Gamma$ disconnects~$H$.
\end{enumerate}
Then the degree sequence of the orbits of~$(H,\Gamma_H)$ is smaller than the one of~$(G,\Gamma)$.
\end{lem}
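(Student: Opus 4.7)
The strategy is to show that the maximum degree occurring in $(H, \Gamma_H)$ is strictly less than the maximum degree occurring in $(G, \Gamma)$; this forces the sorted degree sequences to differ already at the first entry, yielding the required strict lex inequality.

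First I would observe that $V(H)$ is a proper subset of $V(G)$: otherwise $S\cdot\mathrm{id} = S$ would disconnect $H = G$ by condition~\ref{itm_SmallerDegSequ_1}, contradicting~\ref{itm_SmallerDegSequ_4}. Since $\deg_H(v) \leq \deg_G(v)$ for every $v \in V(H)$, with equality iff every $G$-neighbour of $v$ lies in $V(H)$, it suffices to show that no vertex $v \in V(H)$ of maximum $G$-degree has all of its $G$-neighbours in $V(H)$.

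To prove this, I would suppose for contradiction that some $v \in V(H)$ satisfies $\deg_G(v) = d_1$ (the maximum) and $N_G(v) \sub V(H)$, so that $v$ is ``interior'' to $V(H)$. A useful intermediate fact, extracted from the maximality condition~\ref{itm_SmallerDegSequ_4}, is that for every $w \in V(G) \sm V(H)$ the graph $H \cup \set{w}$ must be disconnected by some translate $S\alpha$ with $w \notin S\alpha$, which forces $N_G(w) \cap V(H) \sub S\alpha$ (else adding $w$ only merges components). Hence the ``outer boundary'' of $V(H)$ lies in translates of $S$. Now, starting from the interior vertex $v$ and moving outward in $G$ until one leaves $V(H)$, one uses~\ref{itm_SmallerDegSequ_2} (translates of $S$ are thin with respect to components of $G - S$) and~\ref{itm_SmallerDegSequ_3} (every vertex of $S$ has a neighbour outside $S$) to show that some translate $S\alpha$ must actually cross through the interior region surrounding $v$, and therefore disconnects $H$ itself, contradicting~\ref{itm_SmallerDegSequ_4}.

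Having established that the maximum $H$-degree is strictly less than the maximum $G$-degree, the sorted degree sequences differ at position $1$ with $(H, \Gamma_H)$'s first entry strictly smaller. Thus the degree sequence of $(H, \Gamma_H)$ is strictly less than that of $(G, \Gamma)$ in the lex order.

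The main obstacle is making the propagation argument rigorous: deducing from the local hypothesis ``$v \in V(H)$ is interior and of maximum $G$-degree'' the global existence of a translate $S\alpha$ that disconnects $H$. The difficulty is that while every $G$-neighbour of $v$ itself lies in $V(H)$, the vertices further out may be blocked from $V(H)$ by different translates of $S$, and one must use~\ref{itm_SmallerDegSequ_2} and~\ref{itm_SmallerDegSequ_3} to glue these local blockings into a single translate $S\alpha$ whose restriction to $V(H)$ disconnects $H$.
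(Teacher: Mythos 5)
Your reduction of the lemma to the claim that the \emph{maximum} degree of $(H,\Gamma_H)$ is strictly smaller than that of $(G,\Gamma)$ is where the argument breaks down: that claim is false under the hypotheses of the lemma, so the ``propagation'' step you postpone at the end cannot be carried out, however carefully one uses (\ref{itm_SmallerDegSequ_2}) and (\ref{itm_SmallerDegSequ_3}). In general $H$ does contain vertices all of whose $G$-neighbours lie in $H$ again, and such vertices keep their full $G$-degree in $H$. For a concrete counterexample take $G$ to be the grid on $\ganz^2$ with $\Gamma=\ganz^2$ acting by translation and $S=\{0,1\}\times\ganz$ two adjacent columns: conditions (\ref{itm_SmallerDegSequ_1})--(\ref{itm_SmallerDegSequ_3}) hold, and the induced subgraph $H$ on the three columns $\{0,1,2\}\times\ganz$ satisfies (\ref{itm_SmallerDegSequ_4}) (no translate $\{k,k+1\}\times\ganz$ disconnects it, while any subgraph properly containing it is disconnected by the translate $\{1,2\}\times\ganz$ or by $S$ itself), yet every vertex of the middle column has degree $4$ in $H$, which is the maximum degree of $G$. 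The same phenomenon occurs in the very application the lemma is designed for (the proof of Theorem~\ref{thm_PlanarFiniteModule}): there $H$ is a large piece of a planar quasi-transitive graph bounded by translates of one cycle, and all its ``interior'' vertices retain their full degree, so the first entry of the degree sequence does not move at all. Hence any correct proof must locate the strict decrease at a later coordinate of the sequence, not at the first one; your preliminary observations ($V(H)\neq V(G)$, and the fact that the outer boundary of $V(H)$ is covered by translates of $S$) are true but do not yield this.

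The paper's proof proceeds by a different, orbit-wise bookkeeping. First it shows that any two vertices of $H$ which lie in a common $\Gamma$-orbit and whose degrees in $G$ and in $H$ coincide already lie in a common $\Gamma_H$-orbit: if $x\alpha=y$ but $H\alpha\neq H$, maximality (\ref{itm_SmallerDegSequ_4}) forces some $S\beta$ to separate $H$ from $H\alpha$, and then $y$ together with all its neighbours would have to lie in $S\beta$, contradicting (\ref{itm_SmallerDegSequ_3}). So the entries contributed by full-degree vertices are neither changed nor multiplied. The strict decrease is then produced by the vertices $x$ of maximum degree subject to $\{x\}\cup N(x)$ lying in no translate $H\alpha$: each such $\Gamma$-orbit either contributes nothing to the degree sequence of $(H,\Gamma_H)$ (if it avoids every $H\alpha$) or, after replacing $x$ by a suitable translate, contributes only values $d_H(x)<d_G(x)$~-- possibly several of them, since the $\Gamma$-orbit may split into several $\Gamma_H$-orbits~-- and this is what makes the sequence of $(H,\Gamma_H)$ lexicographically smaller than that of $(G,\Gamma)$. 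To repair your write-up you would have to replace the ``maximum degree drops'' step by an argument of this kind; as it stands, the step that carries the entire strict inequality is a false statement rather than a missing technicality.
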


\begin{proof}
First we show that all vertices in~$H$ that lie in a common $\Gamma$-orbit of~$G$ and whose degrees in~$G$ and in~$H$ are the same also lie in a common $\Gamma_H$-orbit.
Let $x,y$ be two such vertices and $\alpha\in\Gamma$ with $x\alpha=y$.
Suppose that $H\alpha\neq H$.
Then there is some $S\beta$ that separates some vertex of~$H$ from some vertex of~$H\alpha$ by the maximality of~$H$.
But as $y$ and all its neighbours lie in $H$ and in $H\alpha$, they lie in $S\beta$, which is a contradiction to~(\ref{itm_SmallerDegSequ_3}).
Thus, we have $\alpha\in\Gamma_H$.

Now, we consider vertices $x$ such that $\{x\}\cup N(x)$ lies in no $H\alpha$ with $\alpha\in\Gamma$ and such that $x$ has maximum degree with this property.
Let $\{x_1,\ldots, x_m\}$ be a maximal set that contains precisely one vertex from each orbit of those vertices.
If $x_i$ lies outside every $H\alpha$, then no vertex of its orbit is considered for the degree sequence of the orbits of~$(H,\Gamma_H)$.
If $x_i$ lies in~$H$, then its degree in some $H\alpha$ is smaller than its degree in~$G$.
By replacing $x_i$ by $x_i\alpha\inv$, if necessary, we may assume $d_H(x_i)<d_G(x_i)$.
So its value in the degree sequence of orbits of~$(H,\Gamma_H)$ is smaller than its value in the degree sequence of orbits of~$(G,\Gamma)$; but it may be counted multiple times now as the $\Gamma$-orbit containing $x_i$ may be splitted into multiple $\Gamma_H$-orbits.
Nevertheless, the degree sequence of orbits of~$(H,\Gamma_H)$ is smaller than that of~$(G,\Gamma)$.
\end{proof}

Remember that a \emph{block} of a graph is a maximal $2$-connected subgraph.
As any indecomposable closed walk is a cycle and hence lies completely in some block and as any locally finite quasi-transitive graph has only finitely many orbits of blocks, we directly have:

\begin{prop}\label{prop_red1to2}
Let $G$ be a locally finite quasi-transitive graph and let $\Gamma$ act on~$G$ such that $|G/\Gamma|$ is finite.
Then $\WF(G)$ has a $\Gamma$-invariant generating set consisting of finitely many orbits if and only if the same is true for every block $B$ with respect to the action of~$\Gamma_B$.\qed
\end{prop}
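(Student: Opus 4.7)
The plan rests on two observations from the paragraph preceding the statement. Remark~\ref{rem_indecCW=C} guarantees that every indecomposable closed walk is a cycle, and a cycle, being $2$-connected, is contained in a unique block of~$G$. A straightforward length induction shows moreover that $\WF(G)$ is generated by the set of its indecomposable closed walks: a closed walk of length $n$ is either indecomposable or, by the very definition, is generated by closed walks of strictly smaller length. Combining these facts, $\WF(G)$ is generated by $\bigcup_B \WF(B)$ where $B$ ranges over the blocks of~$G$. Since $G$ is locally finite and quasi-transitive, $\Gamma$ has only finitely many orbits on the blocks of~$G$; fix representatives $B_1,\dots,B_k$ of these orbits.

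For the ``if'' direction, let $\CF_j$ be the given $\Gamma_{B_j}$-invariant generating set of $\WF(B_j)$ with finitely many $\Gamma_{B_j}$-orbits, and set $\CF := \bigcup_{j=1}^{k} \Gamma\cdot\CF_j$, which is manifestly $\Gamma$-invariant. Since distinct blocks share at most one vertex, a cycle contained in~$B_j$ can be mapped into~$B_j$ only by an element of $\Gamma_{B_j}$; hence the $\Gamma$-orbits in $\Gamma\cdot\CF_j$ are in bijection with the $\Gamma_{B_j}$-orbits in $\CF_j$, and $\CF$ has only finitely many $\Gamma$-orbits overall. For every block $B=B_j\alpha$ the translate $\CF_j\alpha$ generates $\WF(B)$, so $\CF$ generates $\bigcup_B \WF(B)$, and therefore $\WF(G)$ by the preceding paragraph.

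For the converse, given a $\Gamma$-invariant generating set $\WF'$ of $\WF(G)$ with finitely many $\Gamma$-orbits, I would first replace each $W\in \WF'$ by the finite collection of indecomposable closed walks generating it (available by the same length induction) to reduce to the case that every element of $\WF'$ is a cycle. Setting $\WF'_j := \{W\in \WF' : W\sub B_j\}$, the same block-intersection argument bounds the number of $\Gamma_{B_j}$-orbits of $\WF'_j$ by the total number of $\Gamma$-orbits of $\WF'$. The substantive step, and the main obstacle, is to check that $\WF'_j$ generates $\WF(B_j)$: any closed walk in $B_j$ is generated by $\WF'$ inside $\WF(G)$, and one must show this derivation can be replaced by one taking place entirely inside~$B_j$. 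My plan here is to exploit that any cycle of $\WF'$ outside $B_j$ meets $B_j$ in at most a cut vertex, so its contribution to a derivation of a walk in $B_j$ can only appear via spike pairs that cancel in the reduction and can therefore be eliminated from the derivation. This is essentially the assertion that the fundamental group of~$G$ decomposes as a free product over its blocks, and it is the only point that requires more than bookkeeping.
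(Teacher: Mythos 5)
Your proposal follows essentially the same route as the paper, which derives the proposition directly from the two facts you use: indecomposable closed walks are cycles and hence lie entirely in a single block, and a locally finite quasi-transitive graph has only finitely many orbits of blocks, with the orbit bookkeeping exactly as you describe. The one step you flag~-- confining a derivation of a closed walk in a block $B$ to $B$ itself~-- is precisely what the paper also treats as immediate (and, in the analogous Proposition~\ref{prop_red2to3}, dispatches by the same spike-cancellation observation); it can be made precise by retracting $G$ onto $B$, collapsing every other block to the vertex of $B$ separating it from~$B$, since this retraction fixes walks in $B$, trivialises generators in other blocks, and commutes with sums, rotations, reductions and spike additions.
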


\begin{rem}
In the situation of Proposition~\ref{prop_red1to2} we can take the orbits of the cutvertices one-by-one and apply Lemma~\ref{lem_SmallerDegSequ} for each such orbit.
It follows recursively that each block has a smaller degree sequence of its orbits than the original graph.
Since $|G/\Gamma|$ is finite, there are only finitely many orbits of cut vertices.
So we stop at some point.
\end{rem}

For the reduction to the $3$-connected case for graphs of connectivity~$2$, we apply Tutte's decomposition of $2$-connected graphs into `$3$-connected parts' and cycles.
Tutte~\cite{Tutte} proved it for finite graphs.
Later, it was extended by Droms et al.~\cite{DSS-Tutte} to locally finite graphs.

A \emph{\td} of a graph $G$ is a pair $(T,\cV)$ of a tree $T$ and a family $\cV=(V_t)_{t\in T}$ of vertex sets $V_t\sub V(G)$, one for each vertex of~$T$, such that
\begin{enumerate}[(T1)]
\item $V = \bigcup_{t\in T}V_t$;
\item for every edge $e\in G$ there exists a $t \in V(T)$ such that both ends of $e$ lie in~$V_t$;
\item $V_{t_1} \cap V_{t_3} \sub V_{t_2}$ whenever $t_2$ lies on the $t_1$--$t_3$ path in~$T$.
\end{enumerate}

The sets $V_t$ are the \emph{parts} of $(T,\cV)$ and the intersections $V_{t_1}\cap V_{t_2}$ for edges $t_1t_2$ of~$T$ are its \emph{adhesion sets}; the maximum size of such a set is the \emph{adhesion} of $(T,\cV)$.
Given a part $V_t$, its \emph{torso} is the graph with vertex set $V_t$ and whose edge set is
\[
\{xy\in E(G)\mid x,y\in V_t\}\ \cup\ \{xy\mid \{x,y\}\sub V_t\text{ lies in an adhesion set}\}.
\]

If $\Gamma$ acts on~$G$, then it acts canonically on vertex sets of~$G$.
If every part of the \td\ is mapped to another of its parts and this map induces an automorphism of~$T$ then we call the \td\ \emph{$\Gamma$-invariant}.

\begin{thm}\cite[Theorem 1]{DSS-Tutte}\label{thm_tutte}
Every locally finite $2$-connected graph $G$ has an $\Aut(G)$-invariant \td\ of adhesion~$2$ each of whose torsos is either $3$-connected or a cycle or a complete graph on two vertices.
\qed
\end{thm}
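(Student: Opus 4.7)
The plan is to construct the \td\ by iteratively splitting $G$ along its $2$-separations and taking a canonical nested subfamily of these. Call a pair $(A,B)$ of vertex sets of~$G$ a \emph{$2$-separation} if $A\cup B=V(G)$, there are no edges between $A\sm B$ and $B\sm A$, and $|A\cap B|=2$. Two $2$-separations are \emph{nested} if one side of one is contained in one side of the other, and they \emph{cross} otherwise. The first step is a submodularity / uncrossing lemma: if $(A,B)$ and $(C,D)$ cross, then at least one of the four ``corner'' separations, such as $(A\cap C,\,B\cup D)$, is itself a $2$-separation and is nested with both original ones. This lets us replace crossing pairs by nested refinements without losing information.

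Next I would single out a canonical nested family $\NF$ of $2$-separations. The cleanest definition is that $(A,B)\in\NF$ iff $(A,B)$ is \emph{totally nested}, i.\,e.\ nested with every other $2$-separation of~$G$; the uncrossing lemma implies that iterated corner-taking refines every $2$-separation to a totally nested one. Equivalently, one can describe $\NF$ in terms of $3$-blocks: call two vertices equivalent if no $2$-separation separates them, and let $\NF$ consist of the $2$-separations that separate different equivalence classes. Either description is manifestly $\Aut(G)$-invariant as no choices are made.

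Then I would build $(T,\cV)$ from $\NF$. The nodes of~$T$ correspond to the equivalence classes induced on~$V(G)$ by ``not separated by any element of $\NF$''; the edges correspond to the covering relation in the nested poset $\NF$; and the part $V_t$ consists of the equivalence class at~$t$ together with the (size-$2$) adhesion sets of the tree edges incident with~$t$. That $(T,\cV)$ satisfies (T1)--(T3) and has adhesion~$2$ reduces to standard verifications once $\NF$ is known to be nested, and $\Aut(G)$-invariance of $(T,\cV)$ follows from that of~$\NF$. Inspecting the torsos completes the proof: a torso admitting a further $2$-separation would produce an element of~$\NF$ inside a single part, contradicting the definition of the parts; the remaining possibilities for a $2$-connected graph with no proper $2$-separation are precisely $3$-connected graphs, cycles (arising when the part is a single cycle of virtual and real edges), and $K_2$ (arising when a part collapses to one adhesion pair).

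The main obstacle is genuinely the locally finite, infinite setting. In the finite case Tutte's theorem gives the decomposition by induction on $|V(G)|$; for infinite~$G$ one must show that $\NF$ yields an honest tree and not merely a tree-like order, i.\,e.\ that no linearly ordered infinite chain in~$\NF$ has ``gaps'' at its limits. This is where local finiteness is essential: any two vertices are separated only by $2$-separations whose separator pair lies in a bounded neighbourhood, and a compactness argument (or a K\H{o}nig-type lemma applied to the nested poset) then rules out pathological chains. Making this rigorous while simultaneously keeping everything $\Aut(G)$-equivariant is the principal technical hurdle; once it is overcome, the structural classification of the torsos follows from a short case analysis.
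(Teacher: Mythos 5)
The paper does not prove this statement at all: it is quoted verbatim from Droms, Servatius and Servatius \cite{DSS-Tutte}, so your sketch can only be judged on its own merits. Your overall route~-- a canonical nested family $\NF$ of $2$-separations, a \td\ built from $\NF$, then a classification of the torsos~-- is the standard modern way to obtain Tutte's decomposition, and the canonicity/$\Aut(G)$-invariance part is fine since no choices are made. The genuine gap is in the final, and central, step. Your claim that iterated corner-taking refines every $2$-separation to a totally nested one is false: in a cycle of length at least~$4$ any two ``interleaved'' $2$-separations cross and there is \emph{no} totally nested $2$-separation at all; the same happens around every cycle torso of a general graph, where the relevant $2$-separations cross pairwise. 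Consequently your torso analysis proves too much: you argue that a torso admitting a $2$-separation would yield an element of $\NF$ inside a single part, hence that torsos have no proper $2$-separations~-- but cycle torsos of length at least~$4$ \emph{do} have proper $2$-separations; they simply do not come from totally nested $2$-separations of~$G$. (Also note that a $2$-separation of a torso involves virtual edges and must first be lifted to a $2$-separation of~$G$, which itself needs an argument.) The actual content of Tutte's theorem is exactly the case your dichotomy skips: one must show that if two vertices of a part are separated by some $2$-separation of~$G$ (necessarily one that crosses others), then the family of mutually crossing $2$-separations forces that part to organise itself into a cycle of real and virtual edges. Without that analysis the statement ``the remaining possibilities are $3$-connected graphs, cycles and $K_2$'' is unsupported, and as literally written your argument would exclude cycles altogether.

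A secondary, fixable gap is the infinite case, which you only gesture at with ``a compactness argument''. The clean way to rule out ill-founded chains in $\NF$ and to get an honest tree is the fact~-- used elsewhere in this paper and cited from \cite{ThomassenWoess}~-- that in a locally finite $2$-connected graph every vertex lies in only finitely many $2$-separators; hence any two vertices are separated by only finitely many members of $\NF$, which makes the nested family locally finite and the usual tree construction (and the verification of (T1)--(T3) and adhesion~$2$) go through. You would need to carry this out explicitly, but the essential missing idea is the crossing-separations-give-cycles analysis above.
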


\begin{rem}
In addition to the conclusion of Theorem~\ref{thm_tutte}, we may assume that the \td\ is such that the torsos of tree vertices of degree~$2$ are either $3$-connected or cycles and that no two torsos of adjacent tree vertices $t_1,t_2$ are cycles if $V_{t_1}\cap V_{t_2}$ is no edge of~$G$.
(Remember that edges are two-element vertex sets.)
We call a \td\ as Theorem~\ref{thm_tutte} with this additional property a \emph{Tutte decomposition}.
\end{rem}

Now we reduce the problem of Theorem~\ref{thm_mainCW} from $2$-connected graphs to $3$-connec\-ted ones.

\begin{prop}\label{prop_red2to3}
Let $G$ be a locally finite $2$-connected graph and let $\Gamma$ act on~$G$ such that $|G/\Gamma|$ is finite.
Then $\WF(G)$ has a $\Gamma$-invariant generating set consisting of finitely many orbits if and only if the same is true for each of its torsos $B$ in every Tutte decomposition with respect to the action of~$\Gamma_B$.
\end{prop}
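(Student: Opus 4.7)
Let $(T,\VF)$ be a Tutte decomposition of $G$ as in Theorem~\ref{thm_tutte}; every adhesion set has size~$2$. Since $|G/\Gamma|<\infty$ and the decomposition is $\Gamma$-invariant, $\Gamma$ acts on $T$ with finitely many orbits, so there are finitely many $\Gamma$-orbits of torsos and of oriented tree-edges. For each oriented edge $(t,t')\in E(T)$ with adhesion set $\{x,y\}$, let $G(t,t')$ be the subgraph of $G$ on the union of parts $V_{t''}$ with $t''$ in the component of $T-\{tt'\}$ containing $t'$; then $G = G(t,t')\cup G(t',t)$ with intersection exactly $\{x,y\}$. Choose, $\Gamma$-equivariantly across the finitely many orbits, a path $P_{(t,t')}$ from $x$ to $y$ inside $G(t,t')$. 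Define the \emph{lift} $\ell_t\colon\WF(B_t)\to\WF(G)$ by replacing each traversal of a virtual edge of $B_t$ coming from $tt'\in T$ by $P_{(t,t')}$, and the \emph{projection} $\pi_t\colon\WF(G)\to\WF(B_t)$ by replacing each maximal subwalk outside $V_t$ by the corresponding virtual edge (or by the empty walk if its endpoints coincide); by property~(T3) the endpoints of such a maximal excursion always lie in a common adhesion set, so $\pi_t$ is well-defined. One verifies $\pi_t\circ\ell_t=\mathrm{id}$ and that both $\ell_t$ and $\pi_t$ intertwine the four generating operations on closed walks, with reductions or spike-additions occurring entirely inside an excursion becoming no-ops under $\pi_t$.

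For the ``if'' direction, assume each torso $B_t$ carries a $\Gamma_{B_t}$-invariant finite-orbit generating set $\CF_{B_t}$ of $\WF(B_t)$, chosen equivariantly across the finitely many $\Gamma$-orbits of torsos, so that $\mathcal V:=\bigcup_t \ell_t(\CF_{B_t})$ has finitely many $\Gamma$-orbits. To prove $\mathcal V$ generates $\WF(G)$, let $V\in\WF(G)$ and let $T_V\subseteq T$ be the smallest subtree meeting every part that $V$ visits. I induct on $|V(T_V)|$: when $|V(T_V)|=1$ the walk $V$ lies inside a single~$V_t$, hence in $\WF(B_t)$, hence generated by $\CF_{B_t}$; otherwise pick a suitable $t\in V(T_V)$ and observe that $\ell_t(\pi_t(V))$ is generated by $\mathcal V$, while $V-\ell_t(\pi_t(V))$ reduces (after rotation) to a sum of closed walks $Q\cdot P_{(t,t')}^{-1}$, one per maximal excursion $Q$ of $V$ outside $V_t$, each supported in $G(t,t')$. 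Choosing the replacement paths $P_{(t,t')}$ so as to stay within the tree-extent of $V$ on the $t'$-side of $t$, the tree-extent of each correction walk is strictly contained in $T_V$, closing the induction.

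For the ``only if'' direction, fix a finite-orbit generating set $\mathcal V$ for $\WF(G)$ under $\Gamma$ and a torso $B=B_t$. Given $W\in\WF(B_t)$, the lift $\ell_t(W)\in\WF(G)$ is generated by $\mathcal V$, and applying $\pi_t$ to the generating expression (using its compatibility with the four operations) yields that $W=\pi_t(\ell_t(W))$ is generated by $\pi_t(\mathcal V)$. To control the $\Gamma_B$-orbit count, note that $\pi_t$ is $\Gamma_B$-equivariant, that $\pi_t(V\alpha)$ is trivial unless $V\alpha$ meets $V_t$, and that $\Gamma_B$ acts on $V_t$ with finitely many orbits (a consequence of the $\Gamma$-invariance of the decomposition together with $|G/\Gamma|<\infty$); these observations bound the relevant double coset $\Gamma_{V_i}\backslash\Gamma/\Gamma_B$ restricted to translates of $V_i$ meeting $V_t$.

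The main technical obstacle will be the detailed verification that $\pi_t$ genuinely commutes with the four generating operations (especially reductions across the boundary of $V_t$) and ensuring that the replacement paths $P_{(t,t')}$ can always be chosen to keep correction walks within $T_V$ for the inductive step. Both are bookkeeping matters that I expect to yield to the case analysis afforded by the geometric rigidity of the Tutte decomposition.
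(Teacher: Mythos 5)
Your lifting construction (replace each virtual edge of a torso by a fixed, $\Gamma$-equivariantly chosen path on the far side of the corresponding adhesion set) is the same device the paper uses for the ``if'' direction, but the argument you give that the lifted set $\mathcal V=\bigcup_t\ell_t(\CF_{B_t})$ generates $\WF(G)$ has a genuine gap. Your induction is on $|V(T_V)|$, and it needs each correction walk $Q\,P_{(t,t')}\inv$ to have tree-extent strictly contained in $T_V$. But $P_{(t,t')}$ is chosen once and for all, before $V$ is given~-- and it must be, because $\mathcal V$ itself is built from these paths and has to be $\Gamma$-invariant with finitely many orbits~-- so its tree-extent has nothing to do with $T_V$, and $|V(T_{QP\inv})|$ can exceed $|V(T_V)|$. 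The escape you propose, ``choosing the replacement paths so as to stay within the tree-extent of $V$'', is circular: if you re-choose the paths per walk, then $\ell_t(\pi_t(V))$ is no longer a word in the fixed set $\mathcal V$, and the discrepancy between two lifts made with different paths is a closed walk of exactly the kind you are trying to generate, so the recursion does not terminate. The paper avoids any induction over the tree: it passes to the auxiliary graph $H$ obtained from $G$ by adding all virtual edges (locally finite, since each vertex lies in only finitely many $2$-separators). In $H$ every adhesion set is an edge, and an indecomposable closed walk has no shortcut, so every indecomposable closed walk of $H$ lies in a single part; hence the torso generating sets generate all of $\WF(H)$. A closed walk $C$ of $G$ is in particular a closed walk of $H$, and the entire generating computation in $H$ is then transferred to $G$ wholesale: every traversal of a virtual edge $xy$ is replaced by the fixed path $P_{xy}$, and every removal of a spike $xyx$ becomes the removal of the spikes of $P_{xy}P_{xy}\inv$. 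With this transfer the fixed equivariant choice of paths is all one needs.

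The ``only if'' direction of your proposal also rests on the unverified claim that $\pi_t$ intertwines the four operations; this is precisely the delicate point (sums concatenated at a vertex outside $V_t$, and reductions or spike additions straddling the boundary of $V_t$, do not commute with $\pi_t$ on the nose, only up to further rotations and reductions), and your orbit-count argument for $\pi_t(\mathcal V)$ is too vague to check~-- it implicitly needs that the torso is locally finite and that $\Gamma_{B_t}$ has finitely many orbits on closed walks of bounded length in~$B_t$. The paper sidesteps the projection entirely: it decomposes each generator of $\WF(G)$ into indecomposable closed walks of~$H$, uses again that each of these lies in a single part, and restricts to the part in question. If you want to keep your $\pi_t$/$\ell_t$ framework, the statement you must actually prove is that generation in $\WF(G)$ is preserved under $\pi_t$ up to rotation and reduction, and for the ``if'' direction you should replace the induction on $T_V$ by the transfer-through-$H$ argument above.
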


\begin{proof}
Let $(T,\cV)$ be a Tutte decomposition of~$G$.
Note that every vertex lies in only finitely many $2$-separators (cf.~\cite[Proposition 4.2]{ThomassenWoess}).
Thus, the graph $H$ given by~$G$ together with all edges $xy$, where $\{x,y\}$ forms an adhesion set, is also locally finite, the action of~$\Gamma$ on~$G$ extends canonically to an action on~$H$ and we have $|H/\Gamma|<\infty$ for this action.
There are only finitely many orbits of (the action induced by) $\Aut(G)$ on~$T$, since any $2$-separator of~$G$ uniquely determines the parts $V_t$ of~$(T,\cV)$ it is contained in and since there are only finitely many $\Aut(G)$-orbits of $2$-separators.
Obviously, the restriction of $H$ to any $V_t\in\VF$ is the torso of $V_t$.

\medskip

Let us assume that $\WF(G)$ has a $\Gamma$-invariant generating set consisting of finitely many orbits and let $\CF$ be a finite set of closed walks that generates together with its images $\WF(G)$.
Every $C\in\CF$ can be generated by (finitely many) indecomposable closed walks $C_1,\ldots, C_n$ in~$H$.
So the set~$\DF$ of all those~$C_i$ for all $C\in\CF$ together with the images under $\Gamma$ generates $\WF(H)$.
Each of the closed walks $C_i$ lies in a unique part $V_t$ of~$(T,\VF)$ as they have no shortcut and as every adhesion set in~$H$ is complete.
Note that closed walks which lie in the same $\Gamma$-orbit and in some $V_t$ also lie in the same orbit with respect to the automorphisms of the torso $G_t$ of~$V_t$.
Let $\DF_t$ be the set of all closed walks in~$\DF$ that lie in~$G_t$.
Let $C$ be a closed walk in~$G_t$.
Then it is generated by $C_1,\ldots,C_n\in\DF$.
Since all $C_i\not\sub G_t$ add to spikes, those $C_i\sub G_t$ cancel out.
Thus, $\WF(G_t)$ has an $\Gamma_{G_t}$-invariant set of closed walks consisting of finitely many $\Gamma_{G_t}$-orbits.

\medskip

For the converse, let $\WF(G_t)$ for every torso $G_t$ of~$(T,\VF)$ have a $\Gamma_{G_t}$-invariant generating set~$\CF_t$ of closed walks consisting of finitely many $\Gamma_{G_t}$-orbits.
We may choose the sets $\CF_t$ so that $\CF_t=\CF_{t'}\alpha$ if $\alpha\in\Gamma$ maps~$V_t$ to~$V_{t'}$.
Let $\AF$ be a set of ordered adhesion sets $(x,y)$ of~$(T,\VF)$ consisting of one element for each $\Gamma$-orbit.
For every $(x,y)\in\AF$ with $xy\notin E(G)$ we fix an $x$--$y$ path $P_{xy}$ in~$G$.
Then $xP_{xy}yx$ is a closed walk $C_{xy}$ in~$H$.
If $xy\in E(G)$, let $P_{xy}=xy$ and, for later conveniences, let $C_{xy}=\es$ be the empty walk.
Note that for an adhesion set $\{x,y\}$ we may have fixed two distinct paths $P_{xy}$ and~$P_{yx}$.
We canonically extend the definition of the paths $P_{xy}$ and cycles $C_{xy}$ to all ordered adhesion sets $(x,y)$, i.e.\ if $(x,y)=(x',y')\alpha$ with $(x',y')\in\AF$, set $P_{xy}:=P_{(x'y')}\alpha$ and $C_{xy}:=C_{(x'y')}\alpha$.

Note that there are only finitely many $\Gamma$-orbits of parts of~$(T,\VF)$.
So the union $\CF$ of all $\CF_t$ is a set of closed walks in~$H$ meeting only finitely many $\Gamma$-orbits and generating $\WF(H)$, as it has a generating set of induced closed walks, each of those lies in some~$G_t$ and thus is generated by~$\CF$.
For every $C\in\CF$ let $W_C$ be the element of~$\WF(G)$ that is obtained from~$C$ by replacing its edges $xy$ that form an adhesion set $\{x,y\}$ of~$(T,\VF)$ by~$P_{xy}$.
Let $\CF':=\{W_C\mid C\in\CF\}$.

To see that $\CF'$ generates $\WF(G)$, let $C$ be any closed walk of~$G$.
Thus it is also a closed walk of~$H$ and is generated by some $C_1,\ldots,C_m\in\CF$.
Now we replace each edge $xy$~-- passed in this order on the walk~-- on any of these $C_i$ that forms an adhesion set of $(T,\VF)$ by its path $P_{xy}$ and obtain a closed walk $C_i'$. (Formally, we insert the closed walk $yxP_{xy}$ directly after passing $xy$ and remove the spike $xyx$.)
Then $C_i'$ lies in~$\WF(G)$ since it contains no edge of $H\sm G$.
We now follow the sums, reductions and rotations and addings of spinkes we used to generate $C$ from the $C_i$. Each time we removed a spike $xyx$ for an adhesion set $\{x,y\}$ of $(T,\VF)$, we instead remove many spike, namely $P_{xy}P_{xy}\inv$.
In that way, the $C_i'$ generate $C$, too.
Thus $C$ can be generated by~$\CF'$.
\end{proof}

\begin{rem}\label{rem_TutteForInduc}
Unfortunately, we are not able to apply Lemma~\ref{lem_SmallerDegSequ} directly for Proposition~\ref{prop_red2to3} to see that the torsos in a Tutte decomposition have a smaller degree sequence of orbits, as the orbits are not subgraphs of~$G$.
But as not both vertices of any adhesion set have degree~$2$, it is possible to follow the argument of the proof of Lemma~\ref{lem_SmallerDegSequ} for each of the finitely many orbits of the $2$-separators one-by-one to see that each torso has a smaller degree sequence of orbits than~$G$.
\end{rem}

Now we are able to attack the general VAP-free case.

\begin{prop}\label{prop_VAPFreeAccess}
Let $G$ be a locally finite VAP-free planar graph and let $\Gamma$ act on~$G$ such that $|G/\Gamma|$ is finite.
Then $\WF(G)$ has an $\Aut(G)$-invariant generating set consisting of only finitely many orbits.
\end{prop}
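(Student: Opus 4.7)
My plan is to combine the reductions afforded by Propositions~\ref{prop_red1to2} and~\ref{prop_red2to3} with an induction on the degree sequence of orbits provided by Lemma~\ref{lem_IndOnDegSequ}, so that it suffices to settle the case in which $G$ is $3$-connected. If $G$ is only connected, each block is a subgraph of $G$, hence inherits local finiteness, planarity, and VAP-freeness, and Lemma~\ref{lem_SmallerDegSequ} applied to each orbit of cutvertices produces a strictly smaller degree sequence of orbits. If $G$ is $2$-connected but not $3$-connected, I apply a Tutte decomposition; each virtual edge $xy$ corresponds to a $2$-separator $\{x,y\}$ of $G$ and, since by \cite[Proposition~4.2]{ThomassenWoess} each vertex lies in only finitely many $2$-separators, the torsos are locally finite. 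Drawing each virtual edge as a short arc inside the part of $G$'s embedding bounded by its adhesion pair yields a planar embedding of the torso that introduces no new vertices, so VAP-freeness is preserved; Remark~\ref{rem_TutteForInduc} then guarantees a strictly smaller degree sequence of orbits, and Lemma~\ref{lem_IndOnDegSequ} terminates the induction.

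Assume therefore that $G$ is $3$-connected, locally finite, VAP-free and planar, and that $|G/\Gamma|$ is finite. By the Whitney--Imrich uniqueness of the embedding cited before Lemma~\ref{lem_OptNestedAreAutInvariant}, $\Aut(G)$ acts canonically on the set of faces of $G$, and in a VAP-free embedding every bounded face has a finite boundary, as noted in the introduction. Let $\FF$ be the $\Aut(G)$-invariant set of boundary closed walks of all bounded faces. Each element of $\FF$ contains at least one vertex, each vertex is incident with only finitely many faces by local finiteness, and $\Gamma$ has finitely many vertex orbits; hence $\FF$ splits into finitely many $\Gamma$-orbits and a fortiori into finitely many $\Aut(G)$-orbits.

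It remains to show that $\FF$ generates $\WF(G)$. Given a closed walk $W$, its image in $\mathbb{R}^2$ is compact, so it bounds a region which, by VAP-freeness and local finiteness, contains only finitely many vertices and hence only finitely many bounded faces. I plan to induct on this number of enclosed bounded faces: if the number is zero, repeated spike removals reduce $W$ to the trivial walk; otherwise an edge of $W$ borders a bounded face $f$ in the enclosed region, and summing $W$ (after a suitable rotation) with the boundary of $f$ and performing the resulting spike removals produces a closed walk enclosing strictly fewer bounded faces. The main obstacle I foresee is to execute this cut-and-paste step entirely within the combinatorial framework of sums, reductions, rotations and spike-additions on closed walks---rather than in the easier $\mathbb F_2$-cycle-space setting---since one must keep track of multiplicities and orientations along shared edges; this is tantamount to a direct combinatorial proof that the $2$-complex obtained by filling in every finite face boundary is simply connected.
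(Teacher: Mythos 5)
Your overall architecture coincides with the paper's: reduce to the $3$-connected case via Propositions~\ref{prop_red1to2} and~\ref{prop_red2to3}, take the (finitely many orbits of) finite face boundaries, and show by a cut-and-paste induction that they generate $\WF(G)$. (The degree-sequence induction you invoke for the reduction is superfluous, since those two propositions are direct equivalences and the torsos of a Tutte decomposition are already $3$-connected, cycles, or $K_2$; your check that blocks and torsos stay locally finite, planar and VAP-free is a point the paper leaves implicit, and your embedding argument for the virtual edges is fine.)

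The genuine gap is exactly the step you flag at the end, and it is where the paper does something you omit: before the induction, the paper restricts to \emph{indecomposable} closed walks, which generate $\WF(G)$ by definition and are cycles by Remark~\ref{rem_indecCW=C}. For a cycle the inner region is well defined, contains only finitely many edges by VAP-freeness, and replacing an edge $xy$ of $W$ by the second $x$--$y$ path on the boundary of the face of~$G$ inside~$W$ incident with $xy$ (a sum with that face boundary followed by a spike removal) strictly shrinks the enclosed region, so the induction closes. Your version works with an arbitrary closed walk and inducts on the number of enclosed bounded faces, and this measure need not decrease: if $W$ traverses the chosen edge $xy$ more than once, then after the summation the image of the new walk still contains the whole boundary of~$f$ (indeed it may even grow), so $f$ is still enclosed and the count is unchanged; more generally the multiplicity/orientation bookkeeping you describe as ``tantamount to proving the filled-in $2$-complex is simply connected'' is precisely the statement to be proved, so it cannot be deferred. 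The repair is cheap and already available in the paper: reduce first to indecomposable closed walks (cycles) and run the induction on the edges (or faces) strictly inside the cycle, as in the paper's proof.
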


\begin{proof}
Due to Propositions~\ref{prop_red1to2} and~\ref{prop_red2to3}, it suffices to show the assertion if~$G$ is $3$-connected.
As $3$-connected planar graphs have (up to homeomorphisms) unique embeddings into the sphere, every automorphism of~$G$ induces a homeomorphism of the plane.
So faces are mapped to faces and closed walks that are face boundaries are mapped to such walks.
As $G$ is locally finite and $|G/\Gamma|<\infty$, there are only finitely many $\Gamma$-orbits of finite face boundaries.

Since $\WF(G)$ is generated by the indecomposable closed walks, it suffices to prove that every indecomposable closed walk is generated by the face boundaries.
Since every indecomposable closed walk $W$ is a cycle in~$G$, it determines an inner face and an outer face in the plane.
The inner face of~$W$ contains only finitely many edges as $G$ is VAP-free.
Let $xy$ be an edge of~$W$ and $f$ the face of~$G$ in the inner face of~$W$ containing~$e$.
Let $P_{xy}$ be the second $x$-$y$ path apart from $xy$ on the boundary of~$f$.
Replacing in~$W$ the edge $xy$ by $P_{xy}$ is summing $yxP_{xy}$ to~$W$ and removing the spike $xyx$.
Thus, the resulting closed walk $W'$ is generated by the face boundaries if and only if $W$ is generated by them.
Inductively on the number of edges in the inner face of~ $W'$, we obtain the assertion.
\end{proof}

Now we are able to prove that $\WF(G)$ has a finite generating set as $\Gamma$-module.

\begin{thm}\label{thm_PlanarFiniteModule}
Let $G$ be a locally finite planar graph and let $\Gamma$ act on~$G$ such that $|G/\Gamma|$ is finite.
Then $\WF(G)$ has an $\Gamma$-invariant generating set consisting of only finitely many orbits.
\end{thm}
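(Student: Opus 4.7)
The plan is to induct on the degree sequence of orbits of $(G,\Gamma)$, which is well-founded by Lemma~\ref{lem_IndOnDegSequ}. If $G$ is not $2$-connected, I invoke Proposition~\ref{prop_red1to2} to reduce the claim to each block $B$ under the action of $\Gamma_B$; peeling off the finitely many orbits of cut vertices one at a time and applying Lemma~\ref{lem_SmallerDegSequ} at each step (as in the remark following Proposition~\ref{prop_red1to2}) gives a strictly smaller degree sequence of orbits on every block, so the inductive hypothesis applies. If $G$ is $2$-connected but not $3$-connected, I apply Proposition~\ref{prop_red2to3} to a Tutte decomposition of $G$ (Theorem~\ref{thm_tutte}) and invoke Remark~\ref{rem_TutteForInduc} to conclude that each torso again has a strictly smaller degree sequence of orbits, closing the induction. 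Hence I am reduced to the case that $G$ is locally finite and $3$-connected.

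Next I split the $3$-connected case according to VAP-freeness. If $G$ is VAP-free, Proposition~\ref{prop_VAPFreeAccess} directly produces the required generating set. Otherwise $G$ has accumulation points in its embedding, and I invoke Theorem~\ref{thm_mainCWNested} to obtain a canonical, hence $\Gamma$-invariant, nested generating set $\CF=\bigcup_i\CF_i$ of $\WF(G)$, all of whose elements are indecomposable closed walks. What remains is to prove that $\CF$ consists of only finitely many $\Gamma$-orbits.

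For this last step the plan is to use the nestedness of $\CF$, together with the fact that the planar embedding of a $3$-connected planar graph is unique up to homeomorphism (Whitney/Imrich) so that $\Gamma$ permutes the faces and preserves nestedness, to build a $\Gamma$-invariant tree-decomposition $(T,\VF)$ of $G$ whose adhesion sets are (the vertex sets of) the cycles in $\CF$ and whose parts are the maximal subgraphs of $G$ not separated by any cycle of $\CF$. Once this tree-decomposition is in place, every vertex of $G$ lies in some part, so quasi-transitivity of $\Gamma$ on $V(G)$ forces finitely many $\Gamma$-orbits on the parts, hence on the edges of $T$, hence on $\CF$. The main obstacle is precisely this final step: the individual cycles of $\CF$ can have arbitrarily large length and are produced by a transfinite recursion, so no length bound is available; the orbit finiteness must come from the tree-like structure imposed by nestedness, together with local finiteness and $3$-connectedness. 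Verifying that the associated tree-decomposition really has only finitely many $\Gamma$-orbits of parts (and that no part is degenerate in a way that destroys this count) is the heart of the proof.
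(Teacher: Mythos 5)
Your reductions are fine and match the paper: blocks via Proposition~\ref{prop_red1to2}, Tutte decompositions via Proposition~\ref{prop_red2to3} and Remark~\ref{rem_TutteForInduc}, and the VAP-free case via Proposition~\ref{prop_VAPFreeAccess}. The gap is in the $3$-connected, non-VAP-free case, which is the heart of the theorem and which you leave essentially unproved. Your plan is to show that the canonical nested set $\CF$ of Theorem~\ref{thm_mainCWNested} itself consists of finitely many $\Gamma$-orbits. But a priori the recursion producing $\CF$ runs through all lengths $i\in\nat$, so $\CF$ may contain walks of unbounded length; since walks in one orbit have equal length, ``$\CF$ has finitely many orbits'' is essentially equivalent to the statement you are trying to prove (it holds a posteriori, once one knows $\WF(G)$ is generated by walks of bounded length, but cannot be assumed on the way there). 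Moreover the one argument you offer for it is invalid: from a $\Gamma$-invariant tree-decomposition whose parts are the maximal $\CF$-unseparated subgraphs, the fact that every vertex lies in some part together with $|G/\Gamma|<\infty$ does \emph{not} force finitely many orbits of parts -- distinct parts meeting the same vertex orbit need not be $\Gamma$-equivalent, and with adhesion sets of unbounded size there is no bound on the number of part orbits. So the ``main obstacle'' you name is exactly the missing proof, and the route you sketch does not close it.

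The paper's argument is genuinely different at this point and is worth contrasting. It uses $\CF$ only to extract a \emph{single} indecomposable closed walk $C$ both of whose sides contain infinitely many vertices (such a $C$ exists in $\CF$ because $G$ is not VAP-free and $\CF$ generates), so that the orbit $\{C\alpha\mid\alpha\in\Gamma\}$ is nested. It then takes the maximal subgraphs $H$ not separated by any $C\alpha$, shows there are only finitely many $\Gamma$-orbits of them, and applies Lemma~\ref{lem_SmallerDegSequ} to see that each $(H,\Gamma_H)$ has strictly smaller degree sequence of orbits; the induction on the degree sequence (Lemma~\ref{lem_IndOnDegSequ}, base case VAP-free) -- which in your write-up is spent only on the connectivity reductions -- is what supplies finite generating sets $\EF_H$ for the pieces. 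The remaining, nontrivial step, entirely absent from your proposal, is to show that $\EF:=\bigcup_{H}\bigcup_\alpha\EF_H\alpha$ generates $\WF(G)$: given an indecomposable $D$ not contained in any translate of any $H$, one cuts $D$ along a crossing translate of $C$ into face boundaries $C_1,\ldots,C_k$ of $D$ together with part of $C$, checks that each $C_i$ is not nested with strictly fewer translates $C\beta$ than $D$ is, and inducts on this number, which is finite by Proposition~\ref{prop_MuFinite}. Without this generation argument (or a working substitute for your orbit-counting claim on $\CF$), the proof is incomplete.
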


\begin{proof}
Due to Propositions~\ref{prop_red1to2} and~\ref{prop_red2to3}, we may assume that $G$ is $3$-connected and due to Proposition~\ref{prop_VAPFreeAccess} we may assume that $G$ is not VAP-free.
Let $\varphi\colon G\to\real^2$ be a planar embedding of~$G$.
Let $\CF$ be a non-empty $\Gamma$-invariant nested set of indecomposable closed walks that generates $\WF(G)$, which exists by Theorem~\ref{thm_mainCWNested}.
Since $G$ is not VAP-free, there is some cycle $C$ of~$G$ such that both faces of $\real^2\sm \varphi(C)$ contain infinitely many vertices of~$G$.
As $\CF$ generates $\WF(G)$, one of the indecomposable closed walks in~$\CF$ has the same property as~$C$.
Hence, we may assume $C\in\CF$.
In particular, $\{C\alpha\mid\alpha\in\Gamma\}$ is nested.

We consider maximal subgraphs $H$ of~$G$ such that no $C\alpha$ with $\alpha\in\Gamma$ disconnects~$H$.
In particular, $H$ is connected and for every $C\alpha$ with $\alpha\in\Gamma$ one of the faces of ${\real^2\sm \varphi(C\alpha)}$ is disjoint from~$H$.
Note that there are only finitely many $\Gamma$-orbits of such subgraphs~$H$ as we find in each orbit some element that contains vertices of~$C$ by maximality of~$H$.
Due to Lemma~\ref{lem_SmallerDegSequ}, the pair~$(H,\Gamma_H)$ has a strictly smaller degree sequence of its orbits than $(G,\Gamma)$ as $C$ disconnects~$G$.
Since $H$ is again a locally finite planar graph and $|G/\Gamma|<\infty$, we conclude by induction on the degree sequence of the orbits of such graphs (cf.\ Lemma~\ref{lem_IndOnDegSequ}) with base case if~$G$ is VAP-free that $\WF(H)$ has a $\Gamma_H$-invariant generating set consisting of finitely many $\Gamma_H$-orbits.
Let $\EF_H$ be such a set.

There are only finitely many pairwise non-$\Gamma$-equivalent such subgraphs~$H$.
So let $\HF$ be a finite set of such subgraphs consisting of one per $\Gamma$-orbit.
Let 
\[
\EF:=\bigcup_{H\in\HF}\bigcup_{\alpha\in\Gamma}\EF_H\alpha.
\]
Then $\EF$ is $\Gamma$-invariant and has only finitely many orbits.
We shall show that $\EF$ generates~$\WF(G)$.
It suffices to show that every indecomposable closed walk is generated by~$\EF$.

Let $D$ be an indecomposable closed walk of~$G$.
If $D$ lies entirely inside some of the subgraphs~$H\in\HF$ or its $\Gamma$-images, then, obviously, it is generated by~$\EF$.
So let us  assume that there is some $\alpha\in\Gamma$ such that both faces of $C\alpha$ contain vertices or edges of~$D$.
By considering $D\alpha\inv$ instead of~$D$, we may assume $\alpha=1_\Gamma$.
We add all vertices and edges of~$C$ to~$D$ that lie in the bounded face of~$D$ to obtain a subgraph~$F$ of~$G$.
Then $D$ is the generated by all boundaries $C_1,\ldots, C_k$ of bounded faces of~$F$.

Assume that $C\beta$ with $\beta\in\Gamma$ is not nested with~$C_i$ and suppose that it is nested with~$D$.
Remember that $C$ and~$C\beta$ are nested.
Since $C\beta$ contains points in both faces of~$C_i$, there is some (possibly trivial) common walk $P$ of~$C_i$ and $C\beta$ such that the edges on $C\beta$ incident with the end vertices of~$P$ lie in different faces of~$C_i$ and also the edges of~$C_i$ incident with the end vertices of~$P$ lie in different faces of~$C\beta$.
As $C\beta$ is nested with $C$ and with~$D$, one of these edges belongs to~$C$ and the other to~$D$.
Thus, $C$ and~$D$ must lie in distinct faces of~$C\beta$ and hence must be nested.
This contradiction shows that every $C\beta$ that is not nested with~$C_i$ is not nested with~$D$ either.

As $C$ is not nested with~$D$ but with every~$C_i$, every $C_i$ is not nested with less closed walks $C\beta$ than $D$ and this is a finite number by Proposition~\ref{prop_MuFinite} as all involved closed walks are indecomposable and all closed walks $C\beta$ have the same length.
Induction on the number of closed walks $C\beta$ the current closed walk is not nested with implies that each $C_i$ is generated by~$\EF$ and so is~$D$.
\end{proof}

\section{Fundamental group of planar graphs}

In this section, we want to find two special generating sets for the fundamental group of planar graphs~$G$.
In order to do that, we first prove a general statement about the interplay of generating sets for $\WF(G)$ and for $\pi_1(G)$.
Note that, if $W$ is a closed walk starting and ending at a vertex~$v$, we denote by $[W]$ the homotopy class of~$W$.

\begin{prop}\label{prop_GenPi}
Let $G$ be a planar graph, let $v\in V(G)$, and let $\VF$ be a generating set for $\WF(G)$ that is closed under taking inverses.
Then
\[
\VF_\pi:=\{[P_WWP_W\inv]\mid W\in\VF, P_W \text{ is a $v$-$W$ walk}\}
\]
generates $\pi_1(G)$.
\end{prop}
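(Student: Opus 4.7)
The plan is to set $H := \langle \VF_\pi \rangle \leq \pi_1(G,v)$ and to show $H = \pi_1(G,v)$. Since every element of $\pi_1(G,v)$ is represented by a closed walk at~$v$, and $\VF$ generates $\WF(G)$ by sums, rotations, reductions, and spike additions, the natural strategy is to propagate membership in~$H$ along the generation of closed walks, always re-basing at~$v$ via a chosen path. Concretely, I would prove the following stronger claim by induction on the length of a generation sequence: \emph{for every closed walk $W$ generated by~$\VF$ and every walk $P$ from~$v$ to the starting vertex of~$W$, the class $[PWP^{-1}]$ lies in~$H$}. Applying this to a loop~$W'$ at~$v$ with $P$ the trivial walk then yields the theorem.

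For the base case $W \in \VF$, membership in~$H$ is immediate from the definition of~$\VF_\pi$. In the inductive step I would treat the four operations in turn. Reductions and spike additions produce based-homotopic walks, so the conjugate class $[PWP^{-1}]$ is literally unchanged and stays in~$H$ by induction. For a sum $W = W_1 W_2$ of two closed walks at a common basepoint~$u$, a $v$-$u$ walk~$P$ gives
\[
[PW_1W_2P^{-1}] \;=\; [PW_1P^{-1}]\cdot[PW_2P^{-1}]
\]
by inserting the contractible loop $P^{-1}P$ between $W_1$ and~$W_2$, and both factors lie in~$H$ by induction on shorter generation sequences.

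The main obstacle I expect is the rotation case, because rotations move the basepoint of the closed walk and the $v$-$W$ walks for $W$ and its rotation~$W_0$ are not the same. The remedy is to transfer the basepoint shift onto~$P$: writing $W_0 = RS$ based at $u_0$ and $W = SR$ based at the endpoint~$u$ of~$R$, a $v$-$u$ walk~$P$ gives a $v$-$u_0$ walk $PR^{-1}$, and cancelling the trivial loop $R^{-1}R$ shows
\[
[PWP^{-1}] \;=\; [(PR^{-1})W_0(PR^{-1})^{-1}],
\]
so the induction hypothesis applied to~$W_0$ with the walk $PR^{-1}$ closes the case. The crucial point is that the two conjugates are actually equal as homotopy classes in $\pi_1(G,v)$ rather than merely conjugate, which is exactly what the rerouting of~$P$ achieves. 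Everything beyond this identification is routine bookkeeping along the inductive build-up, so no further difficulty is anticipated.
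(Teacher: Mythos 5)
Your proof is correct, and while it rests on the same core idea as the paper's (induct over the generation process and rebase each spliced-in generator at~$v$ by conjugation), it is organized quite differently. The paper inducts on the number~$\ell$ of generators used to produce a representative~$W$ of a class $\eta$: it assumes the generation can be arranged so that a closed walk~$R$ at~$v$ generated by $W_1,\ldots,W_{\ell-1}$ is turned into~$W$ by adding one long spike and inserting a rotation of~$W_\ell$ at its tip, and then reads off $W=PR$ with $[P]\in\VF_\pi$; this normal form for generation sequences (all intermediate walks based at~$v$, the last generator entering by a single spike-plus-rotation insertion) is asserted rather than derived. You instead run a structural induction over the four defining closure operations (sum, reduction, spike addition, rotation) with the strengthened hypothesis that $[PWP\inv]$ lies in the generated subgroup for \emph{every} walk~$P$ from~$v$ to the start of~$W$, and you specialize to loops at~$v$ with trivial~$P$ at the end. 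The only delicate case, rotation, is handled exactly right: with $W_0=RS$ and $W=SR$, rerouting $P$ to $PR\inv$ and cancelling $R\inv R$ gives equality, not mere conjugacy, of the two based classes. What each approach buys: the paper's argument is shorter, while yours makes the basepoint bookkeeping explicit and needs no claim about how derivations can be normalized; as a by-product, your argument never invokes the hypothesis that $\VF$ is closed under taking inverses (the paper uses it to orient~$W_\ell$), which is consistent with the fact that inversion is not among the generation operations for $\WF(G)$.
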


\begin{proof}
Let $\eta\in\pi_1(G)$ and $W\in\eta$ be a reduced closed walk.
Then $W$ is generated by $W_1,\ldots, W_\ell\in\VF$.
We assume that the walks $W_i$ were used in this order to generate~$W$, in particular, there is a closed walk $R$ that starts at~$v$ and is generated by $W_1,\ldots W_{\ell-1}$ such that $R$ and $W_\ell$ generate~$W$.
By induction on~$\ell$, we may assume that $[R]\in\pi_1(G)$.

Since $R$ and $W_\ell$ generate $W$, there is some vertex $x_0$ on~$R$ such that adding spikes recursively, that is, adding a `large' spike $x_0x_1\ldots x_nx_{n-1}\ldots x_0$, and then inserting a rotation of~$W_\ell$ at $x_n$ results in~$W$.
(Note that we can assume that we need not take the inverse of~$W_\ell$ since $\VF$ is closed under taking inverses.)
But then $W$ is just the same as $PR$ for $P:=vRx_0\ldots x_nW_\ell x_n\ldots x_0R\inv v$.
Since $[R]$ is already generated and $[P]\in\VF_\pi$, we conclude that $[W]$ is generated by~$\VF_\pi$.
\end{proof}

For any $\eta\in\pi_1(G)$, let $P_\eta\in\eta$ be the unique reduced closed walk in~$\eta$ and $P_\eta^\circ$ be its cyclical reduction.
Similarly to the proof of the uniqueness of~$P_\eta$, it is possible to show that $P_\eta^\circ$ is unique.
If $\VF_\pi\sub\pi_1(G)$, set
\[
\VF_\pi^\circ:=\{P_\eta^\circ\mid\eta\in\VF_\pi\}.
\]

Now we are able to prove that the fundamental group of every planar finitely separable graph has a canonical generating set that comes from a nested generating set of~$\WF(G)$.

\begin{thm}
Let $G$ be a planar $3$-connected finitely separable graph.
Then $\pi_1(G)$ has a generating set $\VF_\pi$ such that $\VF_\pi^\circ$ is a canonical nested generating set for $\WF(G)$ consisting only of indecomposable closed walks.
\end{thm}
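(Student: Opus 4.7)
The plan is to assemble $\VF_\pi$ from the canonical nested generating set of $\WF(G)$ supplied by Theorem~\ref{thm_mainCWNested}. First I would invoke that theorem to obtain a sequence $(\CF_i)_{i\in\nat}$ and set $\CF:=\bigcup_{i\in\nat}\CF_i$. Then $\CF$ is canonical, nested, consists of indecomposable closed walks, and generates $\WF(G)$. Since every indecomposable closed walk is a cycle (Remark~\ref{rem_indecCW=C}) and the construction in Theorem~\ref{thm_mainCWNested} is completely symmetric in the two orientations of a cycle, $\CF$ is closed under taking inverses; if this is not built-in, one may add inverses without disturbing canonicity, nestedness, or the generating property.

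Next, fix a basepoint $v\in V(G)$ and, for each $W\in\CF$, choose a reduced walk $P_W$ from~$v$ to the starting vertex of~$W$. Set
\[
\VF_\pi:=\{[P_WWP_W\inv]\mid W\in\CF\}\subseteq\pi_1(G).
\]
By Proposition~\ref{prop_GenPi}, $\VF_\pi$ generates $\pi_1(G)$ (the proposition permits arbitrary choices of~$P_W$, so fixing one choice per~$W$ is sufficient).

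It remains to check $\VF_\pi^\circ=\CF$ as cyclic closed walks; the properties of being canonical, nested, indecomposable, and generating $\WF(G)$ are then inherited verbatim from $\CF$. Let $\eta=[P_WWP_W\inv]\in\VF_\pi$. Since $W$ is a cycle, no reduction can occur strictly inside the $W$-segment; reductions can appear only at the two junctions where $P_W$ meets~$W$ (if, say, the last edge of~$P_W$ coincides with the first edge of~$W$). After all such cancellations the reduced representative $P_\eta$ has the form $P_W' W' (P_W')\inv$, where $W'$ is a cyclic rotation of~$W$ and $P_W'$ is a (possibly shorter) reduced $v$--$W'$ walk. Cyclic reduction then peels off the edges of $P_W'$ and $(P_W')\inv$ one matched pair at a time—this works because the first and last edges of $P_W' W' (P_W')\inv$ are literal inverses—and terminates precisely at~$W'$, because $W'$ itself admits no further cyclic reduction (cycles have no pair of consecutive inverse edges, even when read cyclically, by indecomposability). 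Hence $P_\eta^\circ=W$ as a cyclic walk.

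The subtle step I expect to be most delicate is the second paragraph of the verification above: carefully tracking the reduction of $P_WWP_W\inv$ when $P_W$ shares multiple vertices or edges with~$W$, and confirming that the cyclic reduction never eats into the cycle~$W$ itself. This comes down to the observation that any edge $e$ of~$W$ that survives the linear reduction cannot later be cancelled cyclically, since its cyclic neighbours inside $W'$ are different edges of the cycle~$W$ (by indecomposability, no two consecutive edges of~$W$ coincide), and the reduction front from the $P_W'$--side stops as soon as it reaches $W'$.
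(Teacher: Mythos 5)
Your route is the paper's: take the canonical nested set $\CF$ from Theorem~\ref{thm_mainCWNested}, lift each $W\in\CF$ to $\pi_1(G)$ via $[P_WWP_W\inv]$, quote Proposition~\ref{prop_GenPi}, and check that cyclic reduction recovers $\CF$, so that $\VF_\pi^\circ=\CF$ inherits canonicity, nestedness and the generating property. The genuine gap is the parenthetical claim that Proposition~\ref{prop_GenPi} ``permits arbitrary choices of $P_W$, so fixing one choice per $W$ is sufficient''. The set $\VF_\pi$ in that proposition ranges over \emph{all} $v$--$W$ walks $P_W$, and its proof needs exactly this: the conjugating walk $P=vRx_0\ldots x_nW_\ell x_n\ldots x_0R\inv v$ is produced by the generation process itself (it runs along the previously generated walk $R$ to the insertion point of $W_\ell$), so it cannot be prescribed in advance. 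With all walks $P_W$ admitted, the subgroup generated by $\VF_\pi$ is closed under conjugation --- conjugating $[P_WWP_W\inv]$ by any $[Q]\in\pi_1(G)$ yields $[(QP_W)W(QP_W)\inv]$, again an element of $\VF_\pi$ --- hence it is the normal closure of these classes and therefore all of $\pi_1(G)$. If instead you fix one $P_W$ per $W$, your argument only shows that the \emph{normal closure} of your set is $\pi_1(G)$; since $\pi_1(G)$ is free, a set containing one conjugate of each ``generator'' need not generate as a group. For instance, in $F_2=\langle a,b\rangle$ the subgroup $\langle a,\; b\,a\,b\,a\inv b\inv\rangle$ is proper (a Stallings folding shows $b$ does not lie in it) although its normal closure is everything. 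Nothing in your construction (reducedness of the chosen $P_W$, presence of rotations and inverses in $\CF$) rules out this phenomenon, and no argument is offered for it.

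The repair is simply to revert to the paper's definition, taking $\VF_\pi$ to consist of $[P_WWP_W\inv]$ for \emph{every} $v$--$W$ walk $P_W$ and every $W\in\CF$. The statement only requires $\VF_\pi^\circ$, not $\VF_\pi$, to be canonical, and your verification in the last two paragraphs --- that the reduced representative of $[P_WWP_W\inv]$ has the form $P_W'W'(P_W')\inv$ with $W'$ a rotation of the cycle $W$, and that cyclic reduction strips off $P_W'$ without eating into $W'$ --- goes through verbatim for each choice of $P_W$, since $W$ is a cycle and hence cyclically reduced; thus $\VF_\pi^\circ=\CF$ (up to rotation, as in the paper) still holds. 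Your preliminary remark that $\CF$ is, or can harmlessly be made, closed under inverses is correct and is indeed needed to apply Proposition~\ref{prop_GenPi}; this point is left implicit in the paper.
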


\begin{proof}
Let $v\in V(G)$ and $\VF$ be a canonical nested set of closed walks generating $\WF(G)$ such that $\VF$ consists of indecomposable closed walks.
This set exists by Theorem~\ref{thm_mainCWNested}.
Then the set
\[
\VF_\pi:=\{[P_WWP_W\inv]\mid W\in\VF, P_W \text{ is a $v$-$W$ walk}\}
\]
generates $\pi_1(G)$ by Proposition~\ref{prop_GenPi}.
Since $\VF=\VF_\pi^\circ$, the assertion follows.\end{proof}

In the second theorem on the fundamental group, we look at the sitution in Cayley graphs $G$ of finitely generated groups~$\Gamma$ and for a generating set of $\pi_1(G)$ consisting of only finitely many orbits.
But in order to talk about orbits, we have to define the action on the fundamental group.
If $\Gamma=\langle S\mid R\rangle$, let $\FF_S$ be the free group freely generated by~$S$.
For a word $w\in\FF_S$, let $P_w$ be the walk in~$G$ that starts at the vertex~$v$ and corresponds to the word~$w$, where $v$ is the vertex representing the group element $1_\Gamma$.
We assume that $\pi_1(G)$ is defined with respect to the base vertex~$v$.
Let $W$ be a closed walk in~$G$ that starts at~$v$ and let $W_w$ be the image of~$W$ under the action of the element $g_w\in\Gamma$ that is given by~$w$.
Then $P_w(W_w)P_w\inv$ is a closed walk with first vertex~$v$, it is the image of~$W$ under~$w$.
In this way, $\FF_S$ acts on the closed walks starting at~$v$, and as the images of homotopy equivalent closed walks are again homotopy equivalent, $\FF_S$ acts on $\pi_1(G)$.

\begin{thm}\label{thm_PiFinGen}
Let $G$ be a locally finite planar Cayley graph of a finitely generated planar group~$\Gamma=\langle S\mid R\rangle$.
Then $\pi_1(G)$ has a generating set consisting of finitely many $\FF_S$-orbits.
\end{thm}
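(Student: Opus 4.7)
The plan is to combine Theorem~\ref{thm_PlanarFiniteModule}, applied to the action of $\Gamma$ on its Cayley graph~$G$ (where $|G/\Gamma|=1$ since $\Gamma$ acts transitively), with Proposition~\ref{prop_GenPi}. The former yields a $\Gamma$-invariant generating set $\VF\sub\WF(G)$ that splits into finitely many $\Gamma$-orbits, and the latter lifts any generating set for $\WF(G)$ to one for $\pi_1(G)$. A~priori, Proposition~\ref{prop_GenPi} produces the very large set
\[
\VF_\pi=\{[QWQ\inv]\mid W\in\VF,\,Q\text{ a $v$--$W$ walk}\},
\]
which contains many more than finitely many $\FF_S$-orbits; the content of the proof will be that $\VF_\pi$ is nevertheless contained in the union of finitely many $\FF_S$-orbits, because the freedom to vary~$Q$ is exactly matched by the $\FF_S$-action on~$\pi_1(G)$.

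Concretely, I would choose representatives $W_1,\ldots,W_k$ of the $\Gamma$-orbits of~$\VF$, fix for each~$i$ a walk~$P_i$ from~$v$ to the starting vertex of~$W_i$, set $\sigma_i:=[P_iW_iP_i\inv]\in\pi_1(G)$, and declare
\[
U:=\bigcup_{i=1}^k\FF_S\cdot\sigma_i
\]
to be the desired generating set. By construction, $U$ consists of finitely many $\FF_S$-orbits, so by Proposition~\ref{prop_GenPi} the theorem reduces to verifying $\VF_\pi\sub U$.

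For this verification I would take $[QWQ\inv]\in\VF_\pi$ and use $\Gamma$-invariance of~$\VF$ to write $W=W_i\cdot g$ for some~$i$ and some $g\in\Gamma$. Let~$w\in\FF_S$ be the word read off the walk $Q\cdot(P_i\cdot g)\inv$ from~$v$ to the vertex representing~$g$; then $g_w=g$ and $P_w=Q\cdot(P_i\cdot g)\inv$ literally as walks. Unfolding the $\FF_S$-action (translate the representative $P_iW_iP_i\inv$ of $\sigma_i$ by~$g_w$ and then conjugate by~$P_w$) gives
\[
w\cdot\sigma_i=[P_w\cdot(P_i\cdot g)(W_i\cdot g)(P_i\cdot g)\inv\cdot P_w\inv],
\]
and substituting $P_w=Q\cdot(P_i\cdot g)\inv$ creates two adjacent occurrences of $(P_i\cdot g)\inv(P_i\cdot g)$ whose reduction leaves precisely $QWQ\inv$. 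Hence $[QWQ\inv]=w\cdot\sigma_i\in U$, as required.

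The main obstacle is purely conceptual: one has to recognise that the $\FF_S$-action defined in the paragraph preceding the theorem is exactly the operation needed to switch the connecting walk from the fixed~$P_i$ to an arbitrary~$Q$. Once this matching is spotted, no further induction or bootstrapping is necessary: Theorem~\ref{thm_PlanarFiniteModule} supplies the finitely many seeds~$\sigma_i$, Proposition~\ref{prop_GenPi} transfers generation from $\WF(G)$ to~$\pi_1(G)$, and the single calculation above collapses the large set $\VF_\pi$ into finitely many $\FF_S$-orbits.
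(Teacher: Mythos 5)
Your overall route is the same as the paper's: take the generating set $\VF$ of $\WF(G)$ with finitely many orbits from Theorem~\ref{thm_PlanarFiniteModule}, lift it to $\VF_\pi$ via Proposition~\ref{prop_GenPi}, and absorb the freedom in the connecting walk into the $\FF_S$-action. Your computation correctly handles the translation part: when $Q$ ends at the image under~$g$ of the starting vertex of~$W_i$, reading $w$ off $Q\,(P_i\cdot g)\inv$ does give $w\cdot\sigma_i=[QWQ\inv]$, and this also absorbs the choice of~$Q$ among walks ending at that vertex.

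There is, however, a gap in the verification $\VF_\pi\sub U$. As $\VF_\pi$ is actually used in the proof of Proposition~\ref{prop_GenPi}, the connecting walk may end at an \emph{arbitrary} vertex $x$ of~$W$, and $W$ is then inserted as the rotation starting at~$x$ (the proof inserts a rotation of~$W_\ell$ at~$x_n$). For such an element your definition of~$w$ breaks down: $Q\,(P_i\cdot g)\inv$ is not a walk at all unless the end vertex of~$Q$ happens to be the translated basepoint of~$W_i$, so the step ``$P_w=Q\,(P_i\cdot g)\inv$ literally as walks'' is unavailable in the generic case. What is missing is exactly the point the paper's proof concentrates on: for a fixed~$W$, all classes obtained by attaching $W$ at different vertices via different walks lie in a single $\FF_S$-orbit. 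The repair is short: writing $W=C_1C_2$ with $C_1$ from the basepoint~$z$ of~$W$ to~$x$ and $C_2$ from~$x$ to~$z$, the rotation at~$x$ satisfies $[Q\,C_2C_1\,Q\inv]=[(QC_2)\,C_1C_2\,(QC_2)\inv]$, so you may replace $Q$ by the $v$--$W$ walk $QC_2$, which ends at the basepoint, and then run your computation; equivalently, argue as the paper does by acting with the word of the closed walk $P_W\,xWy\,Q\inv$, an element of~$\FF_S$ mapping to~$1_\Gamma$, whose action is conjugation by that closed walk. With this one additional observation your proof is complete and coincides with the paper's argument.
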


\begin{proof}
Let $v$ be the vertex of~$G$ corresponding to $1\in\Gamma$ and let $\VF$ be a generating set of $\WF(G)$ consisting of only finitely many $\FF_S$-orbits.
This exists by Theorem~\ref{thm_PlanarFiniteModule}.
By Proposition~\ref{prop_GenPi}, it suffices to show that the set
\[
\VF_\pi:=\{[P_WWP_W\inv]\mid W\in\VF, P_W \text{ is a $v$-$W$ walk}\}
\]
has only finitely many $\FF_S$-orbits.
To see this, it suffices to show that any two $[P_WWP_W\inv]$ and $[Q_WWQ_W\inv]$, where $P_W$ and $Q_W$ are $v$-$W$ walks, are in the same $\FF_S$-orbit.
But this is immediate: just take the group element corresponding to the word $w$ defined by the walk $P_WxWyQ_W\inv$, where $x$ is the end vertex of~$P_w$ and $y$ is the end vertex of~$Q_W$.
Since conjugation of $[P_WWP_W\inv]$ by~$W$ is $[Q_WWQ_W\inv]$, the assertion follows.
\end{proof}

Theorem~\ref{thm_PiFinGen} has an immediate consequence to groups:
Droms~\cite{D-InfiniteEndedGroups} proved that finitely generated planar groups are finitely presented.
His proof uses an accessibility result of Maskit~\cite{Maskit65}.
As an application of Theorem~\ref{thm_PiFinGen} we obtain a self-contained proof of Droms's result.

Let $\Gamma=\langle S\mid R\rangle$ be a group with its presentation.
Then $\Gamma\isom \FF_S/R_N$, where $\FF_S$ is the free group with $S$ as a free generating set and $R_N$ is the normal subgroup generated by~$R$.
There is a canonical bijection between $R_N$ and the fundamental group $\pi_1(G)$ of the Cayley graph of~$\Gamma$ with respect to~$S$.
Via this bijection, every generating set for~$\pi_1(G)$ leads to a generating set for~$R_N$.
In particular, we obtain as a corollary of Theorem~\ref{thm_PiFinGen} Droms's theorem on the finite presentability of planar groups.

\begin{thm}\cite{D-InfiniteEndedGroups}
Every finitely generated planar group is finitely presented.\qed
\end{thm}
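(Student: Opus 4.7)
The plan is to deduce finite presentability directly from Theorem~\ref{thm_PiFinGen} via the dictionary between normal closures in~$\FF_S$ and the fundamental group of the Cayley graph already sketched just above. By planarity of~$\Gamma$, fix a finite generating set~$S$ for which the Cayley graph~$G$ of~$\Gamma$ with respect to~$S$ is locally finite and planar, and write $\Gamma\isom\FF_S/R_N$, where $R$ is some (a priori infinite) set of defining relations and $R_N\unlhd\FF_S$ is its normal closure. First I would invoke the canonical group isomorphism $R_N\lra\pi_1(G,v)$ (with $v$ the vertex representing~$1_\Gamma$) that sends $w\in R_N$ to the homotopy class of the closed walk~$P_w$ it spells out at~$v$; surjectivity is clear because every closed walk at~$v$ spells some $w\in\FF_S$ with $g_w=1$, i.e.\ $w\in R_N$, and injectivity follows from the fact that in a graph two closed walks are homotopic iff they differ by spike insertions/deletions, which corresponds exactly to free reduction in~$\FF_S$.

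Next I would verify that under this isomorphism the conjugation action of~$\FF_S$ on~$R_N$ corresponds to the $\FF_S$-action on~$\pi_1(G)$ defined immediately before Theorem~\ref{thm_PiFinGen}. Fix $u\in\FF_S$ and $w\in R_N$, and set $W:=P_w$. The walk $P_u(W_u)P_u\inv$ from the paper traverses first the word~$u$ from~$v$ to $v\cdot g_u$, then the word~$w$ as a loop at $v\cdot g_u$ (since the translate $W_u$ spells~$w$ starting at that vertex, because translation by~$g_u$ preserves edge labels and~$W$ is closed), and finally~$u\inv$ back to~$v$. Reading off edge labels, the entire walk spells $uwu\inv$ starting at~$v$, so it equals $P_{uwu\inv}$; hence the two actions correspond under the isomorphism.

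Finally, I would apply Theorem~\ref{thm_PiFinGen} to obtain a generating set $\VF_\pi$ of $\pi_1(G)$ consisting of finitely many $\FF_S$-orbits, choose one representative per orbit, and transport these back through the isomorphism to a finite subset $R'\subseteq R_N\subseteq\FF_S$. Because $\VF_\pi$ generates $\pi_1(G)$ as an $\FF_S$-module and the action corresponds to conjugation in~$\FF_S$, the normal closure of~$R'$ in~$\FF_S$ equals~$R_N$; therefore $\Gamma=\langle S\mid R'\rangle$ is a finite presentation. The only nontrivial step is the intertwining check in the middle paragraph: it is pure bookkeeping, but it is the unique place where the specific definition of the $\FF_S$-action on~$\pi_1(G)$ actually enters the argument.
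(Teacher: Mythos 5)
Your proposal is correct and follows essentially the same route as the paper: the paper likewise deduces the theorem from Theorem~\ref{thm_PiFinGen} via the canonical correspondence between the normal closure $R_N\unlhd\FF_S$ and $\pi_1(G)$, under which the $\FF_S$-action on $\pi_1(G)$ matches conjugation. You merely spell out the isomorphism and the intertwining check that the paper leaves implicit, which is fine.
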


\section{Homology group of planar graphs}\label{sec_hom}

Instead of looking at the fundamental group, we consider in this section the first simplicial homology group $\HF_1(G)$ of graphs~$G$ as a module over~$\ganz$.
In particular, compared to the first section, the \emph{sum} of two cycles or closed walks is no longer dependent on the question \emph{where} we insert the first in the second one but just depends on the edge sets and the direction in which we pass the edges.
E.g., adding a spike does not change an element of the module and taking the inverse of a closed walk is just the same as taking the negative of the corresponding element of~$\HF_1(G)$.

Let $\VF$ be a finite set of closed walks and
\[
\VF'=\{E(W)\mid W\in\VF\},
\]
where $E(W)$ is the the of edges of the closed walk~$W$.
If $W$ is generated by~$\VF$ but by no proper subset of~$\VF$, then $E(W)$ is the sum of $\VF'$ with coefficients either $1$ or~$-1$.

So we can directly translate our results from the previous sections.
Another prossibility is to go through the proofs once more and see that it stays true with the new summation.

\begin{thm}
Let $G$ be a planar $3$-connected finitely separable graph.
Then $\HF_1(G)$ has a canonical nested generating set.\qed
\end{thm}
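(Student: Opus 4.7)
The plan is to translate Theorem~\ref{thm_mainCWNested} to the homology setting by pushing the generating set for $\WF(G)$ forward through the natural map $W\mapsto [W]\in\HF_1(G)$. First I would apply Theorem~\ref{thm_mainCWNested} to obtain a canonical nested sequence $(\CF_i)_{i\in\nat}$ of sets of closed walks generating $\WF(G)$, with each $\CF_i\sm\CF_{i-1}$ consisting of indecomposable closed walks of length~$i$. By Remark~\ref{rem_indecCW=C}, every $W\in\CF:=\bigcup_i\CF_i$ is a cycle and thus determines, up to a choice of orientation, an element $[W]\in\HF_1(G)$. Let $\CF'\sub\HF_1(G)$ be the resulting set (we regard $W$ and $W\inv$ as yielding the same element of $\CF'$ up to sign, to avoid any non-canonical orientation choice).

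Next I would verify generation. Every class in $\HF_1(G)$ is represented by a $\ganz$-linear combination of closed walks, so it suffices to show that for every $W\in\WF(G)$ the class $[W]$ is a $\ganz$-linear combination of~$\CF'$. By construction of~$\CF$, the walk $W$ is obtained from finitely many $W_1,\ldots,W_n\in\CF$ by a finite sequence of sums, reductions, rotations and additions of spikes. As observed at the beginning of Section~\ref{sec_hom}, these operations act trivially on $\HF_1(G)$ (reduction, rotation and spike addition) or as addition (concatenation), while passing to the inverse corresponds to sign change. Hence $[W]$ is a $\ganz$-linear combination of $\{[V]:V\in\CF\}=\CF'$.

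For the remaining two properties, nestedness of a pair of cycles depends only on their vertex and edge sets and not on any orientation, so nestedness of $\CF$ immediately yields nestedness of~$\CF'$. Canonicity follows because the construction of~$\CF$ in the proof of Theorem~\ref{thm_mainCWNested} is constructive, $\Aut(G)$-invariant and commutes with graph isomorphisms; since the passage $W\mapsto E(W)$ is itself functorial with respect to graph isomorphisms, $\CF'$ inherits the same property.

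The main obstacle I expect is purely a bookkeeping issue: making sure that the translation between the two notions of summation is carried out carefully enough so that the generating property for closed walks really does imply the generating property in~$\HF_1(G)$. Concretely, one must check that when a closed walk is assembled from members of~$\CF$, the signed edge contributions in~$\HF_1(G)$ do not accidentally cancel in a way that loses information. This is, however, exactly the content of the introductory paragraph of Section~\ref{sec_hom}, so it presents no real difficulty; alternatively, as the author remarks, one can simply repeat the arguments of the earlier sections verbatim and check at each step that they remain valid with the homological summation convention.
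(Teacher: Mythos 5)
Your proposal is correct and follows essentially the same route as the paper: Section~\ref{sec_hom} proves this theorem precisely by translating Theorem~\ref{thm_mainCWNested}, observing that sums, reductions, rotations and spike additions of closed walks become addition (or the identity) in $\HF_1(G)$, so that walk-generation yields $\ganz$-linear generation with coefficients $\pm1$, while nestedness and canonicity carry over unchanged. Your additional care about orientation/sign is a harmless refinement of what the paper leaves implicit.
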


We call $\HF_1(G)$ a \emph{finitely generated $\Aut(G)$-module} if it has a generating set consisting of finitely many $\Aut(G)$-orbits.

\begin{thm}\label{thm_PlanarFiniteModule2}
Let $G$ be a locally finite planar graph.
Then $\HF_1(G)$ is a finitely generated $\Aut(G)$-module.\qed
\end{thm}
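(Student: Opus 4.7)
The plan is to obtain Theorem~\ref{thm_PlanarFiniteModule2} as an essentially mechanical translation of Theorem~\ref{thm_PlanarFiniteModule} from closed walks to the first homology group, using the principle recorded at the start of this section: if a closed walk $W$ is generated by a minimal finite subset $\{W_1,\ldots,W_k\}$ of $\WF(G)$, then $E(W)=\sum_i\varepsilon_i E(W_i)$ in $\HF_1(G)$ with each $\varepsilon_i\in\{\pm 1\}$. The statement is only meaningful when $\Aut(G)$ has finitely many orbits on~$G$, which I take to be part of the hypothesis.

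First, apply Theorem~\ref{thm_PlanarFiniteModule} with $\Gamma=\Aut(G)$ to obtain an $\Aut(G)$-invariant generating set $\VF\subseteq\WF(G)$ consisting of only finitely many orbits. Set
\[
\VF':=\{E(W)\mid W\in\VF\}\subseteq\HF_1(G).
\]
The map $W\mapsto E(W)$ is $\Aut(G)$-equivariant, so $\VF'$ is again $\Aut(G)$-invariant and has only finitely many orbits. It therefore suffices to show that $\VF'$ generates $\HF_1(G)$ as a $\ganz[\Aut(G)]$-module.

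For this, let $\eta\in\HF_1(G)$ and represent $\eta$ as a finite integer combination of edge sets of oriented closed walks. For each such closed walk $W$, Theorem~\ref{thm_PlanarFiniteModule} supplies finitely many $\Aut(G)$-translates $\alpha_1 W_1,\ldots,\alpha_k W_k$ of elements of~$\VF$ that generate~$W$ in~$\WF(G)$. Applying the principle above then writes $E(W)$ as a $\pm 1$-combination of the $\alpha_i\cdot E(W_i)\in\ganz[\Aut(G)]\cdot\VF'$, and summing over the walks representing $\eta$ places $\eta$ in the span of~$\VF'$.

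The only point that needs verification is that the four walk-operations descend cleanly to $\HF_1(G)$: rotations and spike additions preserve $E(\cdot)$ (a spike contributes an edge and its reverse, which cancel in the $1$-chain group), while concatenation at a common vertex corresponds to addition of $1$-chains. This is routine and is the only potential obstacle. As an alternative route, one could reread Sections~2--5 and confirm at each step that the arguments, in particular the counting of crossings via $\mu_\EF$ and the construction of the nested sets $\CF_i$, go through once the walk-sum is replaced by the $1$-chain sum; no essential step depends on the specific monoid structure of $\WF(G)$.
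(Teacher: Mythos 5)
Your proposal is correct and takes essentially the same route as the paper: Section~\ref{sec_hom} obtains Theorem~\ref{thm_PlanarFiniteModule2} exactly by translating Theorem~\ref{thm_PlanarFiniteModule} through the equivariant map $W\mapsto E(W)$ (with the alternative of rerunning the earlier proofs with chain addition), which is precisely your argument. Your remark that finitely many $\Aut(G)$-orbits must be read into the hypothesis also matches the paper's intended setting.
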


\section{Accessibility}\label{sec_Access}

A \emph{ray} is a one-way infinite path and two rays are \emph{equivalent} if they lie in the same component whenever we remove a finite vertex set.
This is an equivalence relation whose classes are the \emph{ends} of the graph.
We call a quasi-transitive graph \emph{accessible} if there is some $n\in\nat$ such that any two ends can be separated by removing at most~$n$ vertices.

The \emph{cycle space} of a graph~$G$ is the same as the first simplicial homology group except that we sum over $\mathbb{F}_2$ instead of~$\ganz$.
In~\cite{H-Accessibility} the author proved the following accessibility result for quasi-transitive graphs.

\begin{thm}\cite[Theorem~3.2]{H-Accessibility}\label{thm_AccessCited}
Every quasi-transitive graph $G$ whose cycle space is a finitely generated $\Aut(G)$-module is accessible.\qed
\end{thm}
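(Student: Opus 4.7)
The plan is to convert the hypothesis on the cycle space into a uniform bound on the size of minimum edge cuts separating ends, and then invoke a Dicks-Dunwoody-style structure tree to conclude accessibility. This mirrors the classical cut-space argument of Dicks-Dunwoody but passes through cycles instead.

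First I would extract constants. Since $\CF(G)$ is a finitely generated $\Aut(G)$-module, fix cycles $C_1,\ldots,C_k$ whose $\Aut(G)$-orbits generate $\CF(G)$ and set $N:=\max_i|C_i|$. Every cycle of $G$ is thus an $\mathbb{F}_2$-sum of $\Aut(G)$-translates of the $C_i$, and every summand has length at most $N$. Next I would import the Dicks-Dunwoody machinery of nested edge cuts: for each $n$, the $\Aut(G)$-invariant system of tight edge cuts of size at most $n$ that separate pairs of ends can be symmetrised into a nested family and yields an $\Aut(G)$-invariant tree decomposition $(T_n,\VF_n)$ of adhesion at most $n$. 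Accessibility is equivalent to showing that some $(T_{n_0},\VF_{n_0})$ has parts each of which has at most one end.

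The technical core is to prove the following bound: every minimum edge cut $F$ separating two ends of $G$ satisfies $|F|\leq N$. Dually to the classical Dicks-Dunwoody argument (where cuts are decomposed using cut-space generators), I would argue that any edge $e\in F$ lies on a cycle $C$ joining the two sides, and that a decomposition $C=\sum_i\alpha_i(C_{j_i})$ as a sum of $\Aut(G)$-translates of generators forces at least one summand to cross $F$ in $e$. Because each summand has length at most $N$, it crosses any cut in at most $N$ edges. By iteratively ``peeling off'' the cycle-summands that meet $F$ and using the minimality of $F$ to preclude further reduction, one obtains that $|F|\leq N$. Combined with the tree decomposition, this gives an $\Aut(G)$-invariant tree decomposition of adhesion at most $N$ whose parts have at most one end, which is the Thomassen-Woess characterisation of accessibility for locally finite quasi-transitive graphs.

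The main obstacle is the iteration in the previous paragraph: the naive ``subtract a cycle and repeat'' procedure does not obviously terminate, and the cycles produced by the decomposition are not automatically compatible with the cut $F$. One needs to combine the decomposition with the Dicks-Dunwoody nestedness and minimality of $F$ so that the edges removed from $F$ at each step are genuinely ``cancelled'' rather than relocated. Making this interaction between the $\Aut(G)$-module structure on $\CF(G)$ and the combinatorics of minimum cuts precise is the heart of the argument, and is exactly what is carried out in detail in \cite{H-Accessibility}.
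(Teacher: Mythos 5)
There is a genuine gap, and it is in the step you yourself identify as the technical core. The claimed bound --- that every \emph{minimum} edge cut $F$ separating two ends satisfies $|F|\leq N$, where $N$ is the maximum length of the generating cycles --- is false. Take $G$ to be the cartesian product of a double ray with the complete graph $K_m$ (vertices $(i,j)$, $i\in\Z$, $1\leq j\leq m$). This graph is vertex-transitive and locally finite, and its cycle space is generated by the triangles inside the copies of $K_m$ and the $4$-cycles spanned by consecutive layers, so it is a finitely generated $\Aut(G)$-module with $N=4$. Yet the $m$ double rays $\Z\times\{j\}$ are pairwise edge-disjoint and join the two ends, so by Menger every edge cut separating the ends has at least $m>N$ edges. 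So no amount of ``peeling off'' bounded-length summands can yield $|F|\leq N$; any correct argument must bound end-separating separators by some other mechanism (for instance in terms of $N$ \emph{and} further parameters of the graph, or by working with a different family of separations altogether), and this is precisely where the difficulty lies. A further, smaller mismatch: the theorem as stated does not assume local finiteness, whereas your reduction via the Thomassen--Woess characterisation of edge-accessibility does.

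Beyond the false intermediate claim, your write-up does not actually constitute a proof even of the weaker statement it aims at: the termination of the subtraction procedure and its compatibility with the cut are exactly the points you concede are open and defer to \cite{H-Accessibility}. But the present paper proves nothing here either --- Theorem~\ref{thm_AccessCited} is imported verbatim from \cite[Theorem~3.2]{H-Accessibility} and used as a black box --- so deferring the decisive step to that same reference is circular: it reduces the statement to itself. If you want a self-contained argument, you need to replace the ``$|F|\leq N$ for minimum cuts'' lemma by a correct quantitative statement and supply the termination argument in full; as it stands, the proposal records the intended strategy of a Dicks--Dunwoody-style reduction but does not close the gap between the module hypothesis on the cycle space and a uniform bound on end-separating separators.
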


As a corollary of Theorem~\ref{thm_PlanarFiniteModule2} together with Theorem~\ref{thm_AccessCited}, we obtain Dunwoody's theorem of the accessibility of locally finite quasi-transitive planar graphs, a strengthened version of Theorem~\ref{thm_accessMain}.
(Note that any generating set of the first homology group of a graph is also a generating set of its cycle space.)

\begin{thm}\label{thm_PlanarAccess}\label{thm_access}\cite{D-PlanarGraphsAndCovers}
Every locally finite quasi-transitive planar graph is accessible.\qed
\end{thm}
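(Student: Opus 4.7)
The plan is to derive Theorem~\ref{thm_PlanarAccess} as an immediate corollary of the two results cited just above its statement, namely Theorem~\ref{thm_PlanarFiniteModule2} and Theorem~\ref{thm_AccessCited}. The chain of implications is short enough that essentially no new argument is needed; the work has already been done in the preceding sections.

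First, I would invoke Theorem~\ref{thm_PlanarFiniteModule2} on the given locally finite planar graph $G$ to obtain an $\Aut(G)$-invariant generating set $\EF$ of $\HF_1(G)$ consisting of only finitely many $\Aut(G)$-orbits. Next, I would argue that this same set $\EF$ already serves as a generating set for the cycle space of~$G$. This is the bridge between the integral and $\mathbb{F}_2$ settings: the cycle space is the quotient of (essentially) the same construction by reducing coefficients modulo~$2$, so any $\Z$-linear generating set of $\HF_1(G)$ projects to an $\mathbb{F}_2$-linear generating set of the cycle space, and finitely many $\Aut(G)$-orbits remain finitely many under this projection. This is precisely the parenthetical remark the author makes right before stating the theorem.

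Finally, I would apply Theorem~\ref{thm_AccessCited} to the quasi-transitive graph~$G$, using that its cycle space is now known to be a finitely generated $\Aut(G)$-module, and conclude that $G$ is accessible. This completes the proof.

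There is no real obstacle here: the substantial content sits in Theorem~\ref{thm_PlanarFiniteModule2} (which required all of the machinery of the preceding sections: indecomposable closed walks, the nested generating set from Theorem~\ref{thm_mainCWNested}, the reductions via blocks and Tutte decompositions, and the induction on the degree sequence of orbits in the non-VAP-free case) and in Theorem~\ref{thm_AccessCited} from~\cite{H-Accessibility}. The only mild point worth mentioning explicitly is the passage from $\HF_1(G)$ to the cycle space, but this is a formal observation about changing coefficients from~$\Z$ to~$\mathbb{F}_2$.
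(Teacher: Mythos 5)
Your proposal is correct and follows exactly the paper's own route: apply Theorem~\ref{thm_PlanarFiniteModule2} to get that $\HF_1(G)$ is a finitely generated $\Aut(G)$-module, observe that any such generating set also generates the cycle space over $\mathbb{F}_2$, and conclude via Theorem~\ref{thm_AccessCited}. This matches the parenthetical remark with which the paper justifies the theorem, so there is nothing to add.
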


Note that, in order to prove Theorem~\ref{thm_PlanarAccess}, we do not need the full strength of a nested canonical generating set for the first homology group.
Indeed, instead of applying Theorem~\ref{thm_canGen}, we could just do the same arguments as in Section~\ref{sec_planar} using a nested canonical generating set for the cycle space obtained from~\cite[Theorem~1]{H-GeneratingCycleSpace} to obtain a finite set of cycles generating the cycle space as module.

\section*{Acknowledgement}

I thank J.~Carmesin for showing me a key idea of the proof of Theorem~\ref{thm_PlanarFiniteModule} and P.~Gollin for reading an earlier version of this paper.
I thank M.J.~Dunwoody for pointing out an error in an earlier version of this paper.

\providecommand{\bysame}{\leavevmode\hbox to3em{\hrulefill}\thinspace}
\providecommand{\MR}{\relax\ifhmode\unskip\space\fi MR }
\providecommand{\MRhref}[2]{%
  \href{http://www.ams.org/mathscinet-getitem?mr=#1}{#2}
}
\providecommand{\href}[2]{#2}

\end{document}